\numberwithin{equation}{section}
\def\diam{\mathrm{diam}}
\def\inter{\mathrm{Int}}
\def\Con{\mathrm{Cone}}
\theoremstyle{definition}
\newtheorem{theorem}{Theorem}[section]
\newtheorem{lemma}[theorem]{Lemma}
\newtheorem{corollary}[theorem]{Corollary}
\newtheorem{remark}[theorem]{Remark}
\newtheorem{definition}[theorem]{Definition}
\newtheorem*{theorem*}{Theorem}
\newtheorem*{lem*}{Lemma}
\newtheorem*{rem*}{Remark}
\newtheorem*{claim*}{Claim}
\title{Packing measure and dimension of the limit sets of IFSs of generalized complex continued fractions\footnote{2020 Mathematics Subject Classification: 28A78, 28A80}}
\date{\today}
\author{Kanji INUI\footnote{corresponding author}\\
Department of mechanical engineering and science, \\
Faculty of science and engineering, Doshisha University \\ 
1-3, Tataramiyakodani, Kyotanabe-shi, Kyoto, 610-0394, JAPAN \\
E-mail: kinui@mail.doshisha.ac.jp\\
 \\ 
 Hiroki SUMI \\Course of Mathematical Science, Department of Human Coexistence, \\
Graduate School of Human and Environmental Studies, Kyoto University\\
Yoshida-nihonmatsu-cho, Sakyo-ku, Kyoto, 606-8501, JAPAN\\
E-mail: sumi@math.h.kyoto-u.ac.jp\\
Homepage: http://www.math.h.kyoto-u.ac.jp/\textasciitilde sumi/index.html
}
\begin{document}

\maketitle

\begin{abstract}
We consider a family of conformal iterated function systems (for short, CIFSs) of generalized complex continued fractions. 
Note that in our previous paper we showed that the proper-dimensional Hausdorff measure of the limit set of each CIFS is zero and the packing measure of the limit set with respect to the Hausdorff dimension is positive. 
In this paper, we show that the packing dimension and the Hausdorff dimension of the limit set of each CIFS in the family are equal, and the proper-dimensional packing measure of the limit set is finite. 
\footnote{Keywords: infinite conformal iterated function systems, fractal geometry, packing measures, packing dimension, generalized complex continued fractions.}
\end{abstract}
%
%
%
%
%
\setcounter{subsection}{0}
\section{Introduction and the main results}

One of the major studies in fractal geometry is the study of estimating the dimensions and measures of fractals. 
By estimating the dimensions and measures of fractals, it is possible to explain phenomena that appear in fractals which are different from the ones that appear in `usual figures' (see \cite{Mandel}). 
Therefore, fractal geometry have attracted attention not only in mathematics but also in many other fields. 

Mathematically speaking, iterated function systems are powerful tools to construct fractals (more precisely, limit sets) and there are many papers on the study of estimating the dimensions and measures of the limit sets constructed by iterated function systems. 
For example, Falconer's paper \cite{F1} and book \cite{F2} consider the general theory and examples of the limit sets constructed by conformal iterated function systems with finitely many mappings (for short, finite CIFSs) on intervals under the open set condition (for short, the OSC), and discuss the dimensions and measures of the limit sets.  
In addition, Mauldin's and Urba\'{n}ski's paper \cite{MU} presents the general theory of estimating the dimensions and measures of the limit sets constructed by finite CIFSs on general compact connected subsets of $\mathbb{R}^{d}$ under the OSC (and some conditions). 
By the formula and theorems in \cite{F1}, \cite{F2} and \cite{MU}, under the OSC (and some conditions), we deduce that the packing dimension of the limit set constructed by any finite CIFS equals the Hausdorff dimension of the limit set, the proper-dimensional Hausdorff measure of the limit set is positive and finite, and so is the proper-dimensional packing measure of the limit set. 
Here, it should be mentioned that the positivity and finiteness of the Hausdorff measure of the limit set of any finite CIFS with respect to the zero of the pressure function are equivalent to the OSC (see \cite{PRSS}), and there is an example of a self-similar set with zero Hausdorff measure but positive and finite packing measure without the OSC (see \cite{PSS}). 

In addition, Mauldin's and Urba\'{n}ski's paper \cite{MU} presents the general theory of estimating the dimensions and measures of the fractals (more precisely, the limit sets too) constructed by conformal iterated function systems with infinitely many mappings (for short, infinite CIFSs) under the OSC (and some conditions). 
Indeed, they generalize the above formula and theorems, and by the generalized theorems in their paper we may obtain the non-positivity of the proper-dimensional Hausdorff measure of the limit sets constructed by infinite CIFSs even if we assume the OSC.  
This theorem indicates that we may find a different phenomenon of infinite CIFSs from finite CIFSs under the OSC. 
Here, it should be mentioned that there are now many directions for the studies of the limit sets constructed by infinite CIFSs with the OSC (for example, see \cite{MU2}, \cite{IK}, \cite{MU3}, \cite{RU} and so on) or without the OSC (for example, see \cite{MiU}, \cite{MiU2}, and so on). 

Moreover, in Mauldin's and Urba\'{n}ski's paper \cite{MU}, they constructed an interesting example of an infinite CIFS (with the OSC) and the limit set which are related to the complex continued fractions. 
The precise construction of the example is the following. 
Let $X := \{ z \in \mathbb{C} \ | \ |z-1/2| \leq 1/2 \}$. 
We call $\hat{S} := \{ \hat{\phi}_{(m, n)}(z) \colon X \to X \ | \ (m, n) \in \mathbb{Z} \times \mathbb{N} \}$ the CIFS of complex continued fractions, where $\mathbb{Z}$ is the set of integers, $\mathbb{N}$ is the set of positive integers and
\begin{align*}
    \hat{\phi}_{(m, n)}(z) := \frac{1}{z + m + ni} \quad (z \in X). 
\end{align*}
Let $\hat{J}$ be the limit set of $\hat{S}$ (see Definition \ref{CIFSDEF}) and $\hat{h}$ be the Hausdorff dimension of $\hat{J}$. 
For each $s \geq 0$, we denote by $\mathcal{H}^{s}$ the $s$-dimensional Hausdorff measure and denote by $\mathcal{P}^{s}$ the $s$-dimensional packing measure. 
Regarding this example, Mauldin and Urba\'{n}ski showed the following theorem.
\begin{theorem}[D. Mauldin, M. Urba\'{n}ski (1996)] \label{knownmainresult}
Let $\hat{S}$ be the CIFS of complex continued fractions as above. 
Then, we have $\mathcal{H}^{\hat{h}}(\hat{J}) = 0$ and $0 < \mathcal{P}^{\hat{h}}(\hat{J}) < \infty$. 
\end{theorem}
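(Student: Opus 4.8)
The plan is to describe $\hat J$ through the $\hat h$-conformal measure of $\hat S$ and to apply the Rogers--Taylor density comparison theorems, following the general scheme of \cite{MU}. First I would record the standard structural facts: $\hat S$ satisfies the CIFS axioms of \cite{MU} (the open set condition with $\inter X$, conformality and bounded distortion, the cone condition, and $\sum_{(m,n)}\|\hat\phi_{(m,n)}'\|^{t}<\infty$ for every $t>1$; in particular $\hat S$ is eventually uniformly contracting). The topological pressure $P$ is then $+\infty$ on $(0,1]$, finite, continuous and strictly decreasing on $(1,\infty)$, with $P(t)\to+\infty$ as $t\to1^{+}$ and $P(2)<0$; hence $\hat S$ is strongly regular, $\hat h=\dim_H\hat J\in(1,2)$ is the unique zero of $P$, $\hat h=\sup_N\dim_H\hat J_{S_N}$ over finite subsystems, and there is a unique $\hat h$-conformal probability measure $\nu$ on $\hat J$ with $\nu(\hat\phi_\omega(A))=\int_A|\hat\phi_\omega'|^{\hat h}\,d\nu$ and $\nu(\hat\phi_\omega(X)\cap\hat\phi_\tau(X))=0$ for incomparable words; moreover $\nu$ is equivalent to an ergodic shift-invariant measure. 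I would also fix the elementary geometry: for $(m,n)$ of large modulus, $\hat\phi_{(m,n)}(X)$ is comparable to a round disc of diameter $\asymp(m^{2}+n^{2})^{-1}$ centred near $1/(m+ni)$, these centres accumulate only at $0$ with mutual spacing $\asymp(m^{2}+n^{2})^{-1}$, and by bounded distortion $\nu(\hat\phi_\omega(X))\asymp\|\hat\phi_\omega'\|^{\hat h}\asymp(\diam\hat\phi_\omega(X))^{\hat h}$. Write $\pi$ for the coding map and $d_j:=\diam\hat\phi_{e_1\cdots e_j}(X)$ when $x=\pi(e_1e_2\cdots)$.

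To prove $\mathcal H^{\hat h}(\hat J)=0$, I split $\hat J=\bigl(\bigcup_N\hat J_{S_N}\bigr)\cup U$, where $S_N$ uses only indices $(m,n)$ with $m^{2}+n^{2}\le N$ and $U$ is the set of codings with unbounded digits. Since $P_{S_N}(\hat h)<P(\hat h)=0$, each $\dim_H\hat J_{S_N}<\hat h$, so $\mathcal H^{\hat h}(\hat J_{S_N})=0$. For $U$ I claim the upper density $\overline D^{\hat h}_{\nu}(x):=\limsup_{r\to0}\nu(B(x,r))/r^{\hat h}$ is $+\infty$ at every $x\in U$: taking indices $e_{n_k}=(m_k,n_k)$ with $R_k:=m_k^{2}+n_k^{2}\to\infty$, the cylinder $\hat\phi_{e_1\cdots e_{n_k}}(X)$, hence $x$, lies within $\asymp d_{n_k-1}R_k^{-1/2}$ of $\hat\phi_{e_1\cdots e_{n_k-1}}(0)$, so the ball about $x$ of radius $\rho_k\asymp d_{n_k-1}R_k^{-1/2}$ contains every sibling $\hat\phi_{e_1\cdots e_{n_k-1}e}(X)$ with $e$ of squared modulus $\gtrsim R_k$, of total $\nu$-mass $\asymp d_{n_k-1}^{\hat h}\sum_{m^{2}+n^{2}\gtrsim R_k}(m^{2}+n^{2})^{-\hat h}\asymp d_{n_k-1}^{\hat h}R_k^{1-\hat h}$ (the sum converges since $\hat h>1$); therefore $\nu(B(x,\rho_k))/\rho_k^{\hat h}\gtrsim R_k^{1-\hat h}/R_k^{-\hat h/2}=R_k^{(2-\hat h)/2}\to\infty$, as $\hat h<2$. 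Since $\overline D^{\hat h}_{\nu}\equiv\infty$ on $U$, a Vitali-covering argument gives $\mathcal H^{\hat h}(U)\le C/\lambda$ for every $\lambda>0$, so $\mathcal H^{\hat h}(U)=0$ and $\mathcal H^{\hat h}(\hat J)=0$. (By ergodicity $\nu(\hat J_{S_N})=0$, so $U$ has full $\nu$-measure.)

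For $\mathcal P^{\hat h}(\hat J)<\infty$, I prove the uniform bound $\nu(B(x,r))\ge c\,r^{\hat h}$ for all $x\in\hat J$ and $r>0$. Given $x=\pi(e_1e_2\cdots)$ and small $r$, let $n$ be largest with $d_n\ge r$, so $d_{n+1}\le r\le d_n$; with $\rho:=r/d_n\in[d_{n+1}/d_n,1]$, the ball $B(x,Cr)$ captures a cluster of level-$n$ siblings near $\hat\phi_{e_1\cdots e_n}(0)$ of total $\nu$-mass $\gtrsim d_n^{\hat h}\bigl((d_n/r)^{2}\bigr)^{1-\hat h}=d_n^{2-\hat h}r^{2\hat h-2}\ge r^{2-\hat h}r^{2\hat h-2}=r^{\hat h}$, using $d_n\ge r$ and $2-\hat h>0$ (the ranges $r\gtrsim\diam\hat J$, and codings with no large digits, are immediate). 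The packing density theorem then gives $\mathcal P^{\hat h}(\hat J)\le C/c<\infty$. For $\mathcal P^{\hat h}(\hat J)>0$ the usual route through $\mathcal H^{\hat h}(\hat J)>0$ is unavailable and in fact $\overline D^{\hat h}_{\nu}\equiv\infty$ $\nu$-a.e., so instead I show the \emph{lower} density $\underline D^{\hat h}_{\nu}(x):=\liminf_{r\to0}\nu(B(x,r))/r^{\hat h}$ is $\nu$-a.e.\ finite: the set of codings containing the block $(1,1)(1,1)$ infinitely often has full $\nu$-measure, and at the scales $r=d_n$ corresponding to such a block an accounting of the neighbouring cylinders and of the clusters around each $\hat\phi_{e_1\cdots e_\ell}(0)$ for $\ell\le n$ shows every contribution is $O(d_n^{\hat h})$, whence $\nu(B(x,d_n))\asymp d_n^{\hat h}$ along that subsequence; thus $\underline D^{\hat h}_{\nu}\le C$ on a set of full $\nu$-measure, and the lower-density form of the packing theorem yields $\mathcal P^{\hat h}(\hat J)\ge c'>0$.

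I expect the main obstacle to be this last point. Since $\mathcal H^{\hat h}(\hat J)=0$ is precisely what is being established, one cannot pass through Hausdorff positivity, and one must instead single out a positive-$\nu$-measure family of codings along which the oscillating ratio $r\mapsto\nu(B(x,r))/r^{\hat h}$ returns infinitely often to a bounded value; this demands a simultaneous grip on the combinatorics (how many index cylinders lie inside a ball centred near some $\hat\phi_\omega(0)$, at a prescribed scale) and on the conformal mass of those cylinders --- exactly the ``nice subset of the index set, its cardinality, and the conformal measure'' estimates needed throughout. By contrast $\mathcal H^{\hat h}(\hat J)=0$ and $\mathcal P^{\hat h}(\hat J)<\infty$ both rest on the one clean fact that a ball meeting such a cluster at the right scale carries $\nu$-mass $\asymp r^{2(\hat h-1)}$, which dominates $r^{\hat h}$ (forcing $\mathcal H^{\hat h}=0$) yet is itself absorbed once the surrounding lattice of sibling cylinders is also counted (forcing the uniform lower bound, hence $\mathcal P^{\hat h}<\infty$).
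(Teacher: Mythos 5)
Your overall architecture is the standard Mauldin--Urba\'nski route, which is also the one this paper adapts for the generalized family: the statement is quoted here from \cite{MU} without proof, but its two halves correspond to Theorem \ref{main1} (proved in \cite{IS}) and Theorem \ref{main2}. Your argument for $\mathcal{H}^{\hat h}(\hat J)=0$ is sound: for a coding with a large digit $e_{n_k}$ of modulus $R_k^{1/2}$, the point $x$ is forced to within $\asymp d_{n_k-1}R_k^{-1/2}$ of the cluster point $\hat\phi_{e_1\cdots e_{n_k-1}}(0)$, so the ball of that radius really does swallow the tail of siblings, and the density computation $R_k^{(2-\hat h)/2}\to\infty$ is correct since $1<\hat h<2$. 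Your identification of the positivity of the packing measure as the delicate point, to be handled by bounding the lower density $\nu$-a.e.\ along scales at which the coding has a small digit, is also the right idea (this is the content of \cite{IS} behind Theorem \ref{main1}), though it is only a sketch.

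The genuine gap is in your proof of $\mathcal{P}^{\hat h}(\hat J)<\infty$. With $n$ maximal such that $d_n\ge r$, you claim $B(x,Cr)$ ``captures a cluster of level-$n$ siblings near $\hat\phi_{e_1\cdots e_n}(0)$.'' But the distance from $x$ to that cluster point is $\asymp d_n|e_{n+1}|^{-1}\asymp(d_nd_{n+1})^{1/2}$, which is controlled only by $d_{n+1}\le r$, hence can be as large as $(d_nr)^{1/2}\gg r$ when $d_n\gg r$. In the regime $(d_n/r)^{1/2}\lesssim|e_{n+1}|\lesssim d_n/r$ the ball $B(x,Cr)$ reaches neither the cluster point nor a single sibling cylinder of diameter $\gtrsim r$, and your estimate collapses: the single cylinder $\hat\phi_{e_1\cdots e_{n+1}}(X)$ only gives mass $\asymp d_{n+1}^{\hat h}$, which can be $\ll r^{\hat h}$. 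What is needed there is a count of index cylinders inside a small \emph{off-centre} ball: pulling $B(x,r)$ back by $\hat\phi_{e_1\cdots e_n}$ and applying $z\mapsto 1/z$ produces a ball $B(w,R)$ with $|w|\asymp|e_{n+1}|\gg R\asymp r|e_{n+1}|^{2}/d_n$, and one must show it contains $\gtrsim R^{2}$ lattice points of modulus $\le|w|$; only then does the mass $\gtrsim R^{2}\,d_n^{\hat h}|e_{n+1}|^{-2\hat h}$ come out $\ge r^{\hat h}$. This is precisely the content of the paper's Lemma \ref{inverse_circle}, Lemma \ref{latticepointcase1lemma} and Case 1 of the proof of Theorem \ref{main2}; your sketch only uses the origin-centred (annulus) count, which corresponds to the paper's Case 3. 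Note also that Theorem \ref{packingmeasurefiniteness} (\cite{MU}, Lemma 4.10) does not require the lower bound at every $x\in\hat J$, only at one well-chosen point of each $\phi_b(V)$ --- the paper takes $x=1/b$, the cluster point itself --- but even with that simplification the off-centre lattice count is unavoidable when $r\ll|x|$, so your proposal is missing a necessary ingredient rather than merely proving too much.
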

Note that they obtained an example of infinite CIFSs for which the proper-dimensional Hausdorff measure of the limit set is zero and the proper-dimensional packing measure of the limit set is positive and finite.  
That is, they found an example of a phenomenon of infinite CIFSs with the OSC which cannot hold in any finite CIFSs with the OSC. 

In our previous papers \cite{IOS} and \cite{IS}, we considered a family of CIFSs of generalized complex continued fractions
in order to present other examples of infinite CIFSs. 
In particular, we estimated the Hausdorff dimension of the limit set of each CIFS of the generalized complex continued fractions, and showed the non-positivity of the Hausdorff measure and the positivity of the packing measure of the limit set with respect to the Hausdorff dimension of the limit set. 
Note that the family of the CIFSs introduced in the papers \cite{IOS} and \cite{IS} has uncountably many elements. 
However, there are unsolved problems in \cite{IOS} and \cite{IS}: the estimation of the packing dimension and the proper-dimensional packing measure of the limit set.   
Therefore, the aim of this paper is to estimate the packing dimension and the proper-dimensional packing measure of each limit set in the family of the CIFSs of the generalized complex continued fractions, in order to improve the previous works \cite{IOS} and \cite{IS}. 

The precise statement is the following. 
Let 
\begin{align*}
    A_{0} := \{ \tau = u + iv \in \mathbb{C} \ | \ u \geq 0 \ \mathrm{and} \ v \geq 1 \} 
    \quad \text{and} \quad 
    X := \{ z \in \mathbb{C} \ | \ |z-1/2| \leq 1/2 \}, 
\end{align*}
and we set $I_{\tau} := \{ m +n\tau \in \mathbb{C} \ |\ m, n \in \mathbb{N} \}$ for each $\tau \in A_{0}$, where $\mathbb{N}$ is the set of the positive integers. 
\begin{definition}[The CIFS of generalized complex continued fractions]
Let $\tau \in A_{0}$. 
Then, we say that $S_{\tau} := \{ \phi_{b} \colon X \rightarrow X \ |\ b \in I_{\tau} \} $ is the CIFS of generalized complex continued fractions. 
Here, for each $\tau \in A_{0}$ and $b \in I_{\tau}$, $\phi_{b}$ is defined by 
\begin{align*}
    \phi_{b}(z) := \frac{1}{z + b} \quad ( z \in X ). 
\end{align*}
\end{definition}
We call $\{S_{\tau}\}_{\tau \in A_{0}}$ the family of CIFSs of generalized complex continued fractions. 
For each $\tau \in A_{0}$, let $J_{\tau}$ be the limit set of the CIFS $S_{\tau}$ (see Definition \ref{CIFSDEF}) and let $h_{\tau}$ be the Hausdorff dimension of the limit set $J_{\tau}$. 
We remark that this family of the CIFSs is a generalization of $\hat{S}$ in some sense. 
Indeed, $S_{\tau}$ is related to `generalized' complex continued fractions since each point of the limit set $J_{\tau}$ of $S_{\tau}$ is of the form
\begin{equation*}
\cfrac{1}{b_{1} + \cfrac{1}{b_{2} + \cfrac{1}{b_{3} + \cdots}}}
\end{equation*}
for some sequence $( b_{1}, b_{2}, b_{3}, \ldots) $ in $I_{\tau}$ (See Definition \ref{CIFSDEF}). 
Here, it should be mentioned that there are many kinds of general theories for continued fractions and related iterated function systems (\cite{IK}, \cite{MU}, \cite{MU2}, \cite{MiU}). 
In \cite{IS}, we showed the following theorem. 

\begin{theorem}[{\cite[Theorem 1.3]{IS}}]\label{main1}
Let $\{S_{\tau}\}_{\tau \in A_{0}}$ be the family of CIFSs of generalized complex continued fractions. Then, for each $\tau \in A_{0}$, we have $\mathcal{H}^{h_{\tau}}(J_{\tau}) = 0$ and $0 < \mathcal{P}^{h_{\tau}}(J_{\tau})$. 
\end{theorem}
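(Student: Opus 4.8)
The plan is to reduce both claims to density estimates for the \(h_\tau\)-conformal measure and then to read these off the self-conformal structure of \(J_\tau\). Write \(h:=h_\tau\), \(J:=J_\tau\), and take for granted the following, all of which hold for every \(\tau\in A_0\): \(S_\tau\) is regular, hence carries an \(h\)-conformal probability measure \(m\) on \(J\); one has the strict inequalities \(1=\theta_\tau<h<2\), where \(\theta_\tau\) is the finiteness exponent of \(S_\tau\) (established, together with regularity, in \cite{IOS} and \cite{IS}); \(S_\tau\) has bounded distortion, so that \(m(\phi_\omega(E))\asymp\|\phi_\omega'\|^{h}m(E)\) and \(\diam\phi_\omega(X)\asymp\|\phi_\omega'\|\) for every finite word \(\omega\), while for a single generator \(\|\phi_b'\|\asymp|b|^{-2}\), \(\diam\phi_b(X)\asymp|b|^{-2}\), and \(\phi_b(X)\subseteq B(1/b,C|b|^{-2})\); and \(\#\{b\in I_\tau:|b|\le R\}\asymp R^{2}\). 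Granting these, the standard density theorems for Hausdorff and packing measures — together with the absolute continuity \(\mathcal{H}^{h}|_{J}\ll m\), which follows from conformality and bounded distortion (cover an \(m\)-null subset of \(J\) by an open set of \(m\)-measure \(<\varepsilon\), then by a disjoint family of cylinders inside it, for which \(\sum(\diam\phi_{\omega_i}(X))^{h}\asymp\sum m(\phi_{\omega_i}(X))<\varepsilon\)) — reduce the theorem to proving: (a) \(\limsup_{r\to 0}m(B(x,r))/r^{h}=\infty\) for \(m\)-a.e.\ \(x\in J\), which yields \(\mathcal{H}^{h}(J)=0\); and (b) \(\liminf_{r\to 0}m(B(x,r))/r^{h}<\infty\) on a set of positive \(m\)-measure, which yields \(0<\mathcal{P}^{h}(J)\).

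The common ingredient is a two-sided control of \(m(B(x,r))\) organised by the nested cylinders of the code \(\omega=(b_1,b_2,\dots)\) of a point \(x\in J\), with \(r_n:=\diam\phi_{\omega|n}(X)\). The geometric point is that inside \(\phi_{\omega|n}(X)\) the sub-cylinders \(\phi_{\omega|n}(\phi_b(X))\) accumulate, as \(|b|\to\infty\), at the single point \(p_n:=\phi_{\omega|n}(0)\in\partial\phi_{\omega|n}(X)\), and that \(m\big(\bigcup_{|b|\ge T}\phi_b(X)\big)=\sum_{|b|\ge T}m(\phi_b(X))\asymp\sum_{|b|\ge T}|b|^{-2h}\asymp T^{2-2h}\) (here \(h>1\) is used). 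Transporting this by \(\phi_{\omega|n}\) gives, on the one hand, that when \(|b_{n+1}|\) is large, \(B\big(x,Cr_n/|b_{n+1}|\big)\supseteq\phi_{\omega|n}\big(\bigcup_{|b|\gtrsim|b_{n+1}|}\phi_b(X)\big)\), so \(m\big(B(x,Cr_n/|b_{n+1}|)\big)\gtrsim r_n^{h}|b_{n+1}|^{2-2h}\) and the density ratio at scale \(r\asymp r_n/|b_{n+1}|\) is \(\gtrsim|b_{n+1}|^{2-h}\); and, on the other hand, that at scale \(r=r_n\) the ball \(B(x,r_n)\) meets only \(O(1)\) cylinders of diameter \(\gtrsim r_n\) — this is where the count \(\#\{b:|b|\asymp T\}\asymp T^{2}\) and the shape \(\phi_b(X)\approx B(1/b,|b|^{-2})\) force cylinders of comparable diameter to be comparably separated — together with tails of smaller cylinders clustering near the points \(p_k\), \(k\le n\), which sum to \(\lesssim r_n^{h}\); hence \(m(B(x,r_n))\asymp r_n^{h}\) for every \(n\) and every \(x\in J\). (So \(m(B(x,r))/r^{h}\) stays bounded along the scales \(r_n\) but blows up along the scales \(r_n/|b_{n+1}|\) whenever \(|b_{n+1}|\) is large; this oscillation is exactly what separates the Hausdorff and packing behaviours.)

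From here the two conclusions are short. For (a): the Gibbs property of \(m\) gives \(m(\{|b_n|>M\}\mid\mathcal{F}_{n-1})\asymp\sum_{|b|>M}\|\phi_b'\|^{h}\asymp M^{2-2h}>0\) for every \(M\) (with \(\mathcal{F}_{n-1}\) the \(\sigma\)-algebra generated by the first \(n-1\) digits), so a conditional Borel--Cantelli argument gives \(\limsup_n|b_n|=\infty\) for \(m\)-a.e.\ \(x\); combined with \(2-h>0\) and the first estimate of the previous paragraph this forces \(\limsup_{r\to0}m(B(x,r))/r^{h}=\infty\) for \(m\)-a.e.\ \(x\), so \(\mathcal{H}^{h}(J)=0\). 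For (b): the estimate \(m(B(x,r_n))\asymp r_n^{h}\) gives \(\liminf_{r\to0}m(B(x,r))/r^{h}\le\limsup_n m(B(x,r_n))/r_n^{h}\le C<\infty\) for every \(x\in J\), so \(0<\mathcal{P}^{h}(J)\).

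The main obstacle is the uniform upper bound \(m(B(x,r_n))\lesssim r_n^{h}\): one must control the total \(m\)-mass of all cylinders — including small ``corner'' pieces of larger ones — that meet \(B(x,r_n)\), in the delicate situation where \(x\) lies simultaneously close to the boundaries of several cylinders \(\phi_{\omega|k}(X)\), \(k\le n\), and close to their accumulation points \(p_k\); keeping all these contributions down to \(O(r_n^{h})\) rests on a careful quantitative use of the shape of the generators (\(\phi_b(X)\approx B(1/b,|b|^{-2})\) and their clustering at \(0\)) and of the lattice-point asymptotics for \(I_\tau\), and is presumably also where the parameter region \(A_0\) must be subdivided (e.g.\ according to whether \(\operatorname{Re}\tau=0\) or \(\operatorname{Re}\tau>0\), and \(\operatorname{Im}\tau=1\) or \(\operatorname{Im}\tau>1\)), mirroring the case analysis of \cite{IOS} and \cite{IS}. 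A logically prior input used above as a black box — itself nontrivial and carried out in those papers — is that \(S_\tau\) is a regular CIFS with \(1<h_\tau<2\) for every \(\tau\in A_0\).
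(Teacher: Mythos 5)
First, a structural remark: the paper does not prove this statement at all --- Theorem \ref{main1} is imported verbatim from \cite{IS} --- so your proposal can only be measured against the Mauldin--Urba\'nski machinery that \cite{IS} (and Section 4 of this paper, for the dual finiteness statement) runs on. Your first half, $\mathcal{H}^{h_\tau}(J_\tau)=0$, is essentially sound: the reduction via the upper-density theorem together with $\mathcal{H}^{h_\tau}|_{J_\tau}\ll m_\tau$, the tail estimate $\sum_{|b|\ge T}|b|^{-2h_\tau}\asymp T^{2-2h_\tau}$ (which correctly uses $1=\theta_\tau<h_\tau<2$ and the lattice count), the transport by $\phi_{\omega|n}$ under bounded distortion, and the conditional Borel--Cantelli step all go through. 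You are, however, working harder than necessary: by quasi-invariance of $m_\tau$ under the cylinder maps, a \emph{single} ball with $m_\tau(B(x,r))\ge Lr^{h_\tau}$ propagates to upper density $\gtrsim L$ at every point of $J_\tau$ at arbitrarily small scales, so the criterion in \cite{MU} dual to Theorem \ref{packingmeasurefiniteness} only asks you to exhibit one such ball for each $L$ --- e.g. $B(0,K|b|^{-1})$, whose mass is $\gtrsim |b|^{2-2h_\tau}\gg L\,|b|^{-h_\tau}$ for $|b|$ large since $h_\tau<2$ --- with no almost-everywhere statement and no Borel--Cantelli needed.

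The second half, $0<\mathcal{P}^{h_\tau}(J_\tau)$, has a genuine gap. Everything there rests on the uniform upper bound $m_\tau(B(x,r_n))\le C r_n^{h_\tau}$ (at least for a.e.\ $x$ and infinitely many $n$), and you explicitly defer exactly this step, calling it ``the main obstacle.'' This is not a routine omission: because $h_\tau<2$, the tail $\bigcup_{|b|\ge T}\phi_b(X)$ has $m_\tau$-mass $\asymp T^{2-2h_\tau}$ while its diameter is $\asymp T^{-1}$, so the density genuinely blows up at the countably many points $\phi_{\omega|k}(0)$, and one must quantify, scale by scale, how close $x$ comes to these accumulation points and how many comparable-size cylinders (including boundary pieces of ancestors) the ball $B(x,r_n)$ can meet; this needs the uniform separation of the lattice $I_\tau$ and the disk shape of the $\phi_b(X)$, and is exactly the hard content of the theorem. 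As written, the positivity claim is therefore unproven. Note also that the two-sided estimate you aim for (``for every $n$ and every $x\in J_\tau$'') is strictly stronger than what is required: the criterion actually used in \cite{IS} --- the mirror image of Theorem \ref{packingmeasurefiniteness} --- asks only for one point and one sequence $r_j\to0$ with $m_\tau(B(x,r_j))\le Lr_j^{h_\tau}$, chosen so that the balls stay away from the accumulation point $0$, which is a much lighter verification. Either carry out your upper bound in full (restricting to scales $n$ at which $|b_{n+1}|$ is bounded, which occur infinitely often a.e.\ by recurrence, would simplify the geometry), or switch to the single-ball criterion.
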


We now present the main theorem in this paper. 
\begin{theorem}[the main theorem]\label{main2}
Let $\{S_{\tau}\}_{\tau \in A_{0}}$ be the family of CIFSs of generalized complex continued fractions. Then, for each $\tau \in A_{0}$, we have $\mathcal{P}^{h_{\tau}}(J_{\tau}) < \infty$. 
\end{theorem}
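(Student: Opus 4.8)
The plan is to estimate the $h_\tau$-dimensional packing measure of $J_\tau$ from above by controlling how the limit set sits inside small balls centered at its points. The standard reduction (cf.\ the theory in \cite{MU}) is that $\mathcal{P}^{h_\tau}(J_\tau) < \infty$ will follow once we produce a constant $C > 0$ and, for every $x \in J_\tau$, a scale $r_0(x) > 0$ such that the conformal measure $m_{h_\tau}$ of $J_\tau$ satisfies $m_{h_\tau}(B(x,r)) \geq C r^{h_\tau}$ for all $0 < r < r_0(x)$; here $m_{h_\tau}$ is the $h_\tau$-conformal measure associated to $S_\tau$, which exists and is equivalent to $\mathcal{H}^{h_\tau}\!\!\restriction_{J_\tau}$ only up to the degeneracies allowed for infinite CIFSs — the point being that $m_{h_\tau}$ itself is a genuine finite Borel measure on $J_\tau$. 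So the real content is a \emph{lower} bound on $m_{h_\tau}$-masses of balls, which translates, via the symbolic coding $x = \pi(b_1,b_2,\dots)$ and the bounded distortion property of conformal maps, into: for each $x$ and each small $r$, the cylinder $[b_1,\dots,b_k]$ whose image $\phi_{b_1}\circ\cdots\circ\phi_{b_k}(X)$ has diameter comparable to $r$ must carry $m_{h_\tau}$-mass at least $C'\operatorname{diam}(\phi_{b_1\cdots b_k}(X))^{h_\tau}$, i.e.\ at least a fixed fraction of what its size predicts. By conformality of $m_{h_\tau}$ this reduces to the following uniform statement about the base step: there is $c > 0$ so that for every $b \in I_\tau$,
\begin{align*}
m_{h_\tau}\bigl(\phi_b(X)\bigr) \;\geq\; c\,\|\phi_b'\|_\infty^{h_\tau},
\end{align*}
equivalently $m_{h_\tau}\bigl(\phi_b(X) \cap J_\tau\bigr)$ is not too small relative to the contraction ratio of $\phi_b$ near $b \sim m + n\tau$ with $|b|$ large.

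The way to get this is exactly where the ``nice subsets'' from the earlier papers \cite{IOS}, \cite{IS} and from Theorem~\ref{main1} come in. For a given $b \in I_\tau$ with $|b|$ large, $\phi_b(X)$ is a tiny disk near $1/b$; inside it, $J_\tau$ looks like a rescaled copy of $J_\tau$, so $m_{h_\tau}(\phi_b(X)) = \|\phi_b'\|^{h_\tau}_{\text{eff}} \cdot m_{h_\tau}(J_\tau)$ up to bounded distortion constants, provided we know $m_{h_\tau}(J_\tau) > 0$ and that distortion is uniformly bounded across the whole infinite system — both of which are part of the Mauldin–Urbański framework once the CIFS is checked to satisfy the (OSC) and the cone condition, as was done in \cite{IS}. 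The subtlety that forces the ``three cases (essentially two cases)'' is that when $\tau = u + iv$ has $u$ large, or when $v$ is close to $1$, the index set $I_\tau = \{m + n\tau\}$ distributes its points in $\mathbb{C}$ with different densities in different directions, so the comparison between $r$ (a geometric scale in $X$) and the combinatorial data $(b_1,\dots,b_k)$ that realizes that scale must be done case by case. In each case one picks a ``nice'' sub-collection $\Lambda \subset I_\tau$ — roughly, those $b$ whose images are well-separated and of comparable size — estimates $\#\{b \in I_\tau : \|\phi_b'\|_\infty \asymp t\}$ (this is the cardinality estimate the abstract mentions), and uses it together with the summability $\sum_{b \in I_\tau}\|\phi_b'\|_\infty^{h_\tau} < \infty$ (which holds since $h_\tau$ is the finiteness parameter) to show no ball can be too light.

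Concretely, the steps I would carry out are: (1) recall from \cite{MU} the criterion that $\mathcal{P}^{h_\tau}(J_\tau)<\infty$ follows from a uniform lower density bound $\liminf_{r\to 0} m_{h_\tau}(B(x,r))/r^{h_\tau} > 0$ locally in $x$, with the constant allowed to blow up only in a controlled way; (2) set up the symbolic coding and invoke the bounded distortion property (Koebe) for the conformal maps $\phi_b$ of the CIFS $S_\tau$, reducing the ball estimate to a cylinder estimate; (3) split into the cases on $\tau$ (small imaginary part vs.\ the rest, and within that the behavior of the real part), and in each case define the nice subset $\Lambda$ of $I_\tau$ and prove the counting/separation estimates; (4) combine the counting estimate with the conformality of $m_{h_\tau}$ and $m_{h_\tau}(J_\tau) > 0$ (from Theorem~\ref{main1}, since $0 < \mathcal{P}^{h_\tau}(J_\tau)$ already forces the conformal measure to be nondegenerate) to obtain the uniform lower bound; (5) conclude. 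The main obstacle I expect is step (3)–(4): controlling the geometry of $\phi_b(X)$ \emph{uniformly} over the infinite index set when $\tau$ is near the boundary of $A_0$ (e.g.\ $v = 1$), because there the lattice-like set $I_\tau$ can have points $m + n\tau$ and $m' + n'\tau$ that are anomalously close, making the separation needed for the packing (as opposed to Hausdorff) lower bound delicate — this is precisely why a clever choice of the ``nice'' subset, rather than all of $I_\tau$, is essential.
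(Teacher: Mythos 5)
Your step (1) is the right starting point and matches the paper's strategy: the paper invokes Lemma 4.10 of \cite{MU} (Theorem \ref{packingmeasurefiniteness} here), which reduces $\mathcal{P}^{h_{\tau}}(J_{\tau})<\infty$ to finding $L,\xi>0$, $\gamma\geq 1$ such that for every $b\in I_{\tau}$ and every $r$ with $\gamma\cdot\diam\phi_{b}(X)\leq r\leq\xi$ there is a point $x\in\phi_{b}(V)$ with $m_{\tau}(B(x,r))\geq L r^{h_{\tau}}$. But your key reduction in step (2) is where the argument breaks: you reduce the ball estimate to the ``base step'' $m_{\tau}(\phi_{b}(X))\geq c\,\|\phi_{b}'\|^{h_{\tau}}$. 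That inequality is an immediate consequence of conformality of $m_{\tau}$ and the bounded distortion property (it is exactly the computation in the paper's inequality (1.2)), so it carries no content; and it does not imply the needed estimate. The radii $r$ in the criterion run \emph{up to} the fixed scale $\xi$, i.e.\ $B(x,r)$ is typically much larger than the single cylinder $\phi_{b}(X)$, whose mass is only of order $(\diam\phi_{b}(X))^{h_{\tau}}\ll r^{h_{\tau}}$. The whole difficulty is to show that $B(x,r)$ with $x=1/b$ contains \emph{enough other} first-level cylinders $\phi_{a}(X)$, $a\in I_{\tau}$, so that $\sum_{a}|a|^{-2h_{\tau}}\gtrsim r^{h_{\tau}}$. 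The paper does this by pushing $B(x,r)$ forward under $z\mapsto 1/z$ to a ball $B(w,R)$, counting the lattice points of $I_{\tau}$ in $B(w,R)\cap B(0,|w|)$ (Lemma \ref{latticepointcase1lemma}) or in an annulus $D_{2}(\tau,R)$ (Lemma \ref{lattice_point_in_annulus}), and then exploiting $h_{\tau}<2$ together with $|x|^{2}\leq r$ to convert the count into the lower bound $L r^{h_{\tau}}$. None of this counting machinery, nor the use of $h_{\tau}<2$, appears in your proposal, and without it the argument does not close.

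Two further misreadings of the structure: the ``three cases'' are not a case division on $\tau$ (real part large, $v$ near $1$, etc.) but on the relation between the radius $r$ and $|x|=1/|b|$, namely $r\leq|x|/2$, $|x|/2<r\leq 2|x|$, and $r>2|x|$; the ``nice subsets'' $I_{\tau,1}$ and $I_{\tau}(\bar r)$ are determined by the geometry of the ball $B(x,r)$, not by $\tau$. Since the theorem is proved for each fixed $\tau$, all constants may depend on $\tau$, so there is no uniformity issue near $\partial A_{0}$. Finally, the separation of cylinders you worry about is the input needed for an \emph{upper} bound on $m_{\tau}(B(x,r))$ (relevant to positivity of the packing measure, Theorem \ref{main1}); for the \emph{lower} bound needed here the issue is the opposite one, namely guaranteeing that sufficiently many cylinders lie inside the ball, which is exactly what the lattice-point lemmas provide.
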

Combining Theorem \ref{main1} and Theorem \ref{main2}, we obtain the following corollary. 
\begin{corollary}\label{main}
Let $\{S_{\tau}\}_{\tau \in A_{0}}$ be the family of CIFSs of generalized complex continued fractions. Then, for each $\tau \in A_{0}$, we have $\mathcal{H}^{h_{\tau}}(J_{\tau}) = 0$ and $0 < \mathcal{P}^{h_{\tau}}(J_{\tau}) < \infty$. 
In particular, for each $\tau \in A_{0}$, the packing dimension of the limit set $J_{\tau}$ equals the Hausdorff dimension $h_{\tau}$ of the limit set $J_{\tau}$. 
\end{corollary}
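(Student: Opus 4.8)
The plan is to deduce Theorem \ref{main2} from a uniform lower bound for the conformal measure of small balls, together with a standard density comparison; Corollary \ref{main} then follows at once by combining this with Theorem \ref{main1}. By the general theory of \cite{MU} and the geometric facts verified for this family in \cite{IOS} and \cite{IS}, for each $\tau\in A_0$ the system $S_\tau$ is regular, so it carries an $h_\tau$-conformal Borel probability measure $m=m_\tau$ on $J_\tau$: for every finite word $\omega$ over $I_\tau$ and every Borel $A\subseteq X$ one has $m(\phi_\omega(A))=\int_A|\phi_\omega'|^{h_\tau}\,dm$, and $m(\phi_\omega(X)\cap\phi_{\omega'}(X))=0$ whenever $\omega\ne\omega'$ have the same length. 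Using the bounded distortion property (so that $\diam\phi_\omega(X)\asymp\|\phi_\omega'\|$ and $m(\phi_\omega(X))\asymp\|\phi_\omega'\|^{h_\tau}$, with constants depending only on $\tau$), recalling that $1<h_\tau\le2$ (the lower bound being part of the regularity/dimension estimates of \cite{IS}, the upper one trivial since $J_\tau\subseteq\mathbb C$), and invoking the density theorem that $\liminf_{r\to0}\mu(B(x,r))/r^{s}\ge\lambda>0$ for all $x\in E$ implies $\mathcal P^{s}(E)\lesssim\lambda^{-1}\mu(\mathbb R^{d})$ (cf.\ \cite{MU}), Theorem \ref{main2} is reduced to the claim: there are $c>0$ and $r_0>0$, depending only on $\tau$, with $m(B(x,r))\ge c\,r^{h_\tau}$ for all $x\in J_\tau$ and $r\in(0,r_0]$.

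To prove the claim, fix $x=\pi(\omega)\in J_\tau$, where $\pi$ is the coding map and $\omega=(\omega_1,\omega_2,\dots)$ an infinite word over $I_\tau$; write $\phi_{\omega|_n}:=\phi_{\omega_1}\circ\dots\circ\phi_{\omega_n}$ and $D_n:=\|\phi_{\omega|_n}'\|$, so $D_0=1$ and $D_n\downarrow0$. Given $r\in(0,1]$, choose $n\ge1$ with $\diam\phi_{\omega|_n}(X)<r\le\diam\phi_{\omega|_{n-1}}(X)$; then $D_n\lesssim r\lesssim D_{n-1}$ and $\phi_{\omega|_n}(X)\subseteq B(x,r)$. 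The obstruction characteristic of an infinite CIFS appears here: $D_n/D_{n-1}\asymp|\omega_n|^{-2}$ is not bounded below, so the trivial estimate $m(B(x,r))\ge m(\phi_{\omega|_n}(X))\asymp D_n^{h_\tau}$ is useless when $r$ lies in the ``gap'' $D_n\ll r\ll D_{n-1}$. The way around this is to exploit the accumulation structure inside $\phi_{\omega|_{n-1}}(X)$: the cylinders $C_b:=\phi_{\omega|_{n-1}}(\phi_b(X))$, $b\in I_\tau$, accumulate at the single point $p:=\phi_{\omega|_{n-1}}(0)$, and by bounded distortion together with $|z+b|\asymp|b|$ on $X$ one has $\mathrm{dist}(C_b,p)\asymp D_{n-1}|b|^{-1}$, $\diam C_b\asymp D_{n-1}|b|^{-2}$, $m(C_b)\asymp D_{n-1}^{h_\tau}|b|^{-2h_\tau}$, and $\mathrm{dist}(C_b,C_{\omega_n})\asymp D_{n-1}|b-\omega_n|\,|\omega_n|^{-2}$ for $b$ near $\omega_n$; moreover $\#\{b\in I_\tau:|b|\asymp R\}\asymp R^2$ and $\#\{b\in I_\tau:|b-b_0|\le\lambda\}\asymp\lambda^2$ for $1\le\lambda\lesssim|b_0|$.

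Now set $\rho:=|x-p|\asymp D_{n-1}|\omega_n|^{-1}$ and fix a small $\varepsilon_0\in(0,1/4)$ depending only on the distortion constants of $S_\tau$; I would split into three cases according to the size of $r$ against $\rho$, the ``nice'' subset of $I_\tau$ in each case being the family of $b$ with $C_b\subseteq B(x,r)$, whose cardinality and total $m$-measure are then estimated.
\emph{(i)} If $r\le\varepsilon_0\rho$, then $B(x,r)$ contains the $\asymp(r/D_n)^2$ ``siblings'' $C_b$ with $|b-\omega_n|\lesssim r|\omega_n|^2D_{n-1}^{-1}$, each with $m(C_b)\asymp(D_{n-1}|\omega_n|^{-2})^{h_\tau}=D_n^{h_\tau}$, so by disjointness $m(B(x,r))\gtrsim(r/D_n)^2D_n^{h_\tau}=r^2D_n^{h_\tau-2}\ge c\,r^{h_\tau}$, using $D_n\lesssim r$ and $h_\tau\le2$.
\emph{(ii)} If $\varepsilon_0\rho<r<\varepsilon_0^{-1}\rho$, then $\asymp|\omega_n|^2$ siblings lie in $B(x,r)$ while $D_n\asymp r|\omega_n|^{-1}$, giving $m(B(x,r))\gtrsim|\omega_n|^2D_n^{h_\tau}\asymp|\omega_n|^{2-h_\tau}r^{h_\tau}\ge c\,r^{h_\tau}$, since $|\omega_n|\ge|1+\tau|>1$ and $h_\tau\le2$.
\emph{(iii)} If $r\ge\varepsilon_0^{-1}\rho$, then $|x-p|<r/2$, so $B(p,r/2)\subseteq B(x,r)$ contains every $C_b$ with $|b|\gtrsim D_{n-1}/r$, and since $h_\tau>1$ the geometric sum gives $m(B(x,r))\gtrsim D_{n-1}^{h_\tau}(D_{n-1}/r)^{2-2h_\tau}=D_{n-1}^{2-h_\tau}r^{2h_\tau-2}\ge c\,r^{h_\tau}$, using $r\lesssim D_{n-1}$ and $h_\tau\le2$. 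A few degenerate situations ($r\asymp D_n$; $|\omega_n|=O(1)$; the index set $\{b:|b-\omega_n|\lesssim\lambda\}$ being pinched by the boundary of the cone $I_\tau$; $D_{n-1}/r=O(1)$) are treated directly and yield the same bound. Combining the cases proves the claim, hence Theorem \ref{main2}.

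The main obstacle is exactly the control of the scale gap $D_n/D_{n-1}$, which has no lower bound because $I_\tau$ is infinite. Everything hinges on $h_\tau\le2$: the infinitely many, essentially ``two-dimensionally packed'' sibling cylinders that sit inside such a gap carry just enough conformal mass to fill a ball of the intermediate radius, and the technical heart of the proof is to make this uniform in $x$ and $r$ — that is, to identify the correct ``nice subset'' of $I_\tau$ in each of the three regimes and to estimate both its cardinality and the total conformal measure it carries.
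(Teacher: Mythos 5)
Your deduction of the corollary itself coincides with the paper's: positivity is Theorem \ref{main1}, finiteness is Theorem \ref{main2}, and $0<\mathcal{P}^{h_{\tau}}(J_{\tau})<\infty$ together with $\dim_{\mathcal{H}}J_{\tau}=h_{\tau}$ forces the packing dimension to equal $h_{\tau}$. Where you genuinely diverge is in how you propose to establish Theorem \ref{main2}. The paper invokes Lemma 4.10 of \cite{MU} (Theorem \ref{packingmeasurefiniteness}), which requires the lower bound $m_{\tau}(B(x,r))\geq L r^{h_{\tau}}$ only at one point $x=1/b$ of each \emph{first-level} cylinder and only for radii $r\in[\gamma\,\diam\phi_{b}(X),\xi]$; the reduction from arbitrary points of $J_{\tau}$ and arbitrary small scales to this first-generation statement is absorbed once and for all into that lemma via bounded distortion. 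You instead prove the full uniform lower density bound at every $x=\pi(\omega)\in J_{\tau}$ and every small $r$, locating the generation $n$ with $\diam\phi_{\omega|_{n}}(X)<r\leq\diam\phi_{\omega|_{n-1}}(X)$, working with the sibling cylinders $\phi_{\omega|_{n-1}}(\phi_{b}(X))$, and then applying the general lower-density theorem for packing measures. This is a valid alternative route, and at generation $n=1$ (where $p=0$ and $\rho=|x|$) your regimes (i)--(iii) are exactly the paper's Cases 1--3 ($r\leq|x|/2$, $|x|/2<r\leq 2|x|$, $r>2|x|$): they rest on the same lattice-point counts (your $\#\{b:|b-\omega_{n}|\lesssim\lambda\}\gtrsim\lambda^{2}$ and $\#\{b:|b|\asymp R\}\gtrsim R^{2}$ are Lemmas \ref{latticepointcase1lemma} and \ref{lattice_point_in_annulus}), the same distortion estimates (Lemma \ref{basicestimate}), and the same use of $1<h_{\tau}<2$ (Lemma \ref{GCCFisCIFS}). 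What your route costs is that the factor $D_{n-1}$ must be carried uniformly in $n$ through every estimate, and the ``degenerate situations'' you defer (radii comparable to $D_{n}$ or to $D_{n-1}$, lattice counts below the thresholds of the counting lemmas, indices pinched against the boundary of the cone $I_{\tau}$) must all be written out --- precisely the bookkeeping that Lemma 4.10 of \cite{MU} is designed to spare you. Nothing in the outline is wrong, but as written it is a correct plan that re-derives the Mauldin--Urba\'{n}ski reduction by hand rather than a complete proof.
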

\begin{remark}
By the general theory of finite CIFSs with the OSC, the proper-dimensional Hausdorff measure and the proper-dimensional packing measure of the limit set are positive and finite (see \cite{F1}, \cite{F2} and \cite{MU}). 
However, Corollary \ref{main} indicates that for each $S_{\tau}$ in the family of CIFSs of generalized complex continued fractions which consists of uncountably many elements, the packing dimension of the limit set equals the Hausdorff dimension of the limit set, the proper-dimensional Hausdorff measure of the limit set is zero, and the proper-dimensional packing measure of the limit set is positive and finite. 
This is the phenomenon which Mauldin and Urba\'{n}ski first discovered in \cite{MU} and cannot hold in the finite CIFSs under the OSC. 
\end{remark}
\begin{remark}
It was shown that for each $\tau \in A_{0}$, $\overline{J_{\tau }}\setminus J_{\tau}$ is at most countable and $h_{\tau }=\dim _{\mathcal{H}}(\overline{J_{\tau }}$) (\cite{IOS}). 
Thus, for each $\tau \in A_{0}$, we have $0 < \mathcal{P}^{h_{\tau}}(\overline{J_{\tau }}) = \mathcal{P}^{h_{\tau}}(J_{\tau }) < \infty$. 
Also, for each $\tau \in A_{0}$, since the set of attracting fixed points of elements of the semigroup generated by $S_{\tau}$ is dense in $J_{\tau }$, Theorem 1.1 of \cite{St} implies that $\overline{J_{\tau }}$ is equal to the Julia set of the rational semigroup generated by $\{ \phi _{b}^{-1}\mid b\in I_{\tau }\}$.  
\end{remark}
The ideas and strategies to prove the main theorem are the following. 
To prove the finiteness of the packing measure of the limit set $J_{\tau}$, we apply Lemma 4.10 in the paper \cite{MU} to $S_{\tau}$ for each $\tau \in A_{0}$. 
That is, it suffices to show that for each $r > 0$ (which is sufficiently small) and $b \in I_{\tau}$ with $\diam \phi_{b}(X) / r \ll 1$, we have
\begin{align} \label{image_of_goal}
    m_{\tau}(B(x, r)) \gtrsim r^{h_{\tau}},  
\end{align}
where $x := 1/b$, $B(x, r)$ is the open ball with the center $x$ and the radius $r$ with respect to the Euclidean distance in $\mathbb{C}$, $m_{\tau}$ is $h_{\tau}$-conformal measure of $S_{\tau}$ (see Theorem \ref{existenceofconformalmeasure}) and $f(r) \gtrsim g(r)$ means that there exists a `small' constant $c > 0$ such that $f(r) \geq c g(r)$ for all $r > 0$. 
Note that there is a useful inequality for the conformal measure $m_{\tau}$ : 
\begin{align} \label{useful_inequality_of_conformal_measure}
    m_{\tau}\left( \bigcup_{a \in I} \phi_{a}(X) \right)
    \gtrsim \sum_{a \in I} \ |a|^{-2h_{\tau}} 
\end{align}
for each $I \subset I_{\tau}$, where we use the property on the $S_{\tau}$ (see, Lemma \ref{basicestimate} and the inequality (\ref{useful_inequality})). 
To prove the above inequality (\ref{image_of_goal}), we essentially consider the following two cases:  
\begin{enumerate}
    \item $r \ll |x|$ and 
    \item $r \gg |x|$.  
\end{enumerate}
In the first case, by the assumptions, we deduce that $|x|^{2} \leq r$ and $ r < |x|$. 
We next define $I_{\tau, 1} \subset I_{\tau}$ and show the following inclusion and inequalities: 
\begin{align} \label{strategy_equation_inclusion_case1}
     B(x, r) \supset \bigcup_{a \in I_{\tau, 1}} \phi_{a}(X), \qquad
     |a| \lesssim |x|^{-1} \ \ (a \in I_{\tau, 1})
     \qquad \text{and} \qquad 
     \#(I_{\tau, 1}) \gtrsim \left( \frac{r}{|x|^{2}} \right)^{2}. 
\end{align}
Here, for any set $A$, we denote by $\#(A)$ the cardinality of $A$.  
Then, by the inequality in (\ref{useful_inequality_of_conformal_measure}) and the inclusion and inequalities in (\ref{strategy_equation_inclusion_case1}), we have
\begin{align*}
    m_{\tau}(B(x, r)) 
    &\geq m_{\tau}\left( \bigcup_{a \in I_{\tau, 1}} \phi_{a}(X) \right)
    \gtrsim \sum_{a \in I_{\tau, 1}} \ |a|^{-2h_{\tau}}
    \gtrsim \#(I_{\tau, 1}) \cdot |x|^{2h_{\tau}} \\
    &\gtrsim r^{2} \cdot |x|^{2h_{\tau}-4} 
    \gtrsim r^{2} \cdot r^{h_{\tau}-2} 
    =  r^{h_{\tau}}, 
\end{align*}
where we use the inequality $|x|^{2} \leq r$ and $h_{\tau} < 2$. 
Note that we prove some additional lemmas to prove the inclusion and inequalities in (\ref{strategy_equation_inclusion_case1}). 

In the second case, since $r \gg |x|$, we have
$B( 0, cr ) \subset B(x, r)$, where $c > 0$ is a `small' positive number. 
Next, we define $I_{\tau}(r) \subset I_{\tau}$ and show the following inclusion and inequalities: 
\begin{align} \label{strategy_equation_inclusion_case2}
    B(0, cr) \supset \bigcup_{a \in I_{\tau}(r)} \phi_{a}(X), \qquad
    |a| \lesssim r^{-1} \ \ (a \in I_{\tau}(r))
    \qquad \text{and} \qquad 
    \# (I_{\tau}(r)) \gtrsim r^{-2}. 
\end{align}
Then, by the inequality (\ref{useful_inequality_of_conformal_measure}) and the inclusion and  inequalities in (\ref{strategy_equation_inclusion_case2}), we have
\begin{align*}
    m_{\tau}(B(x, r)) 
    &\geq m_{\tau}(B(0, cr))
    \geq m_{\tau}\left( \bigcup_{a \in I_{\tau}(r)} \phi_{a}(X) \right)
    \gtrsim \sum_{a \in I_{\tau}(r)} \ |a|^{-2h_{\tau}} \\
    &\gtrsim \#(I_{\tau}(r)) \cdot r^{2h_{\tau}}
    \gtrsim r^{2h_{\tau}-2}
    > r^{h_{\tau}}, 
\end{align*}
where we use the inequalities $r^{2h_{\tau}-2} > r^{h_{\tau}}$ since $r$ is sufficiently small and $h_{\tau} < 2$.  

The rest of the paper is organized as follows. 
In Section 2, we summarize the general theory of the CIFSs and recall some definitions and theorems in the theory. 
In Section 3, we present some results for the CIFSs of generalized complex continued fractions in the paper \cite{IOS} and \cite{IS}. 
Also, we prove a slight modification of lemmas in the paper \cite{IOS} and \cite{IS} to prove the main theorem. 
In Section 4, we prove the main theorem (Theorem \ref{main2}). 
In this section, we first show some additional lemmas to prove the main theorem and next show the main theorem. 
Note that we consider three cases to prove the main theorem. 

\section{Conformal iterated function systems}
In this section, we summarize the general theory of CIFSs (\cite{IOS}, \cite{IS}, \cite{MU}, \cite{MU2}). 
We first recall the definition of CIFSs and the limit set of the CIFSs. 
\begin{definition}[Conformal iterated function system] \label{CIFSDEF}
    Let $X \subset \mathbb{R}^{d}$ be a non-empty compact and connected set with the Euclidean norm $|\cdot|$ and let $I$ be a finite set or bijective to $\mathbb{N}$. 
    Suppose that $I$ has at least two elements. 
    We say that $S := \{ \phi_{i} \colon X \to X \ |\ i \in I \}$ is a conformal iterated function system (for short, CIFS) if $S$ satisfies the following conditions.  
    \begin{enumerate}
        \item Injectivity: $\phi_{i} \colon X \to X$ is injective for each $i \in I$.
        \item Uniform Contractivity: There exists $c \in (0, 1) $ such that, for all $i \in I$ and $x, y \in X $, the following inequality holds: 
        \begin{align*}
            | \phi_{i}(x) - \phi_{i}(y) | \leq c| x - y |. 
        \end{align*}
        \item Conformality: There exist $\epsilon > 0$ and an open and connected subset $V \subset \mathbb{R}^{d}$ with $X \subset V$ such that for all $i \in I $, $\phi_{i} $ extends to a $C^{1+\epsilon}$diffeomorphism on $V$ and $\phi_{i} $ is conformal on $V$ 
        i.e. for each $x \in V$ and $i \in I$, there exists $C_{i}(x) > 0$ such that for each $u, v \in \mathbb{R}^{d}$, 
        \begin{align*}
            |\phi_{i}^{\prime}(x)u - \phi_{i}^{\prime}(x)v| = C_{i}(x)|u - v|. 
        \end{align*}
        Here, $\phi_{i}^{\prime}(x)$ denotes the derivative of $\phi_{i}$ at $x \in V$. 
        \item Open Set Condition (OSC): For all $ i, j \in I \ ( i \neq j )$, $ \phi_{i}(\inter(X)) \subset \inter(X)$ and  $ \phi_{i}(\inter(X)) \cap  \phi_{j}(\inter(X)) = \emptyset $. Here, $\inter(X)$ denotes the set of interior points of $X$ with respect to the topology in $\mathbb{R}^{d}$.
        \item Bounded Distortion Property(BDP): There exists $\tilde{K} \geq 1 $ such that for all $x, y \in V $ and for all $w \in I^{*} := \bigcup_{n=1}^{\infty} I^{n}$, the following inequality holds:
        \begin{align*}
            |\phi^{\prime}_{w}(x)| \leq \tilde{K} \cdot |\phi^{\prime}_{w}(y)|. 
        \end{align*}
        Here, for each $n \in \mathbb{N}$ and $w = w_{1}w_{2} \cdots w_{n} \in I^{n}$, we set $\phi_{w} := \phi_{w_{1}} \circ \phi_{w_{2}} \circ \cdots \circ \phi_{w_{n}}$ and $\displaystyle |\phi^{\prime}_{w}(x)|$ denotes the  norm of the derivative of $\phi_{w}$ at $x \in X$ with respect to the Euclidean norm on $\mathbb{R}^{d}$.
        \item Cone Condition: For all $x \in \partial X$, there exists an open cone $\Con(x, u, \alpha)$ with a vertex $x$, a direction $u$, an altitude $|u|$ and an angle $\alpha$ such that $\Con(x, u, \alpha)$ is a subset of $\inter(X)$.
    \end{enumerate}
\end{definition}
We endow $I$ with the discrete topology and endow $I^{\infty} := I^{\mathbb{N}}$ with the product topology.
Note that $I^{\infty}$ is Polish in general and $I^{\infty}$ is a compact metrizable space if $I$ is a finite set.

Let $S$ be a CIFS and we set $w|_{n} := w_{1}w_{2} \cdots w_{n} \in I^{n}$ and $\phi_{w|_{n}} := \phi_{w_{1}} \circ \phi_{w_{2}} \circ \cdots \circ \phi_{w_{n}}$ for each $w=w_{1}w_{2}w_{3} \cdots \in I^{\infty}$. 
Note that
$\bigcap_{n \in \mathbb{N}} \phi_{w|_{n}}(X)$ is a singleton (denoted by $\{ x_{w} \}$) 
and 
the coding map $\pi_{S} \colon I^{\infty} \rightarrow X$ of $S$ defined by $\pi_{S}(w) := x_{w}$ is well-defined. 
Then, the limit set $J_{S}$ of $S$ is defined by
\begin{align*}
    J_{S} :=\pi(I^{\infty}) = \bigcup_{w \in I^{\infty}} \bigcap_{n \in \mathbb{N}} \phi_{w|_{n}}(X) ( \subset X \subset \mathbb{R}^{d} ).  
\end{align*}
We set $h_{S} := \dim_{\mathcal{H}}J_{S}$, where we denote by $\dim_{\mathcal{H}} A$ the Hausdorff dimension of a set $A \subset \mathbb{R}^{d}$ with respect to the Euclidean distance.

We next recall the pressure function of CIFS $S$ as follows in order to define the regularity of CIFSs. 
\begin{definition}\label{PSD}
    For each $n\in \mathbb{N}$, $[0, \infty]$-valued function $\psi^{n}_{S}$ is defined by
    \begin{align*}
        \psi^{n}_{S}(t) := \sum_{w \in I^{n}} ||\phi_{w}^{\prime}||_{X}^{t} \quad (t \geq 0). 
    \end{align*}
    Here, for a $C^{1}$ map $f \colon Y \to \mathbb{R}^{d}$ ($Y \subset \mathbb{R}^{d}$), we set
    \begin{align*}
        |f^{\prime}(y)| := \sup \{ |f^{\prime}(y) u | \ | \ u \in \mathbb{R}^{d}, |u| = 1 \} \ (y \in Y) \quad \text{and} \quad 
        ||f^{\prime}||_{Y} := \sup\{ | f^{\prime}(y) | \ | \ y \in Y \}. 
    \end{align*}
\end{definition}
We set $\theta_{S} := \inf\{ t \geq 0 |\ \psi_{S}^{1}(t) < \infty \}$ and $F(S) := \{ t \geq 0 \ | \ \psi^{1}_{S}(t) < \infty \}$. 
Note that $\log\psi_{S}^{n}(t)$ is subadditive with respect to $n \in \mathbb{N}$ for each $t \geq 0$ (see \cite{MU}). 
By the subsadditivity, we now define the pressure function of $S$ as follows. 
\begin{definition}[Pressure function] \label{PSfunc}
    The pressure function of $S$ is the function $P_{S} \colon [0, \infty) \to (-\infty, \infty]$ defined by 
    \begin{align*}
        \displaystyle P_{S}(t) := \lim_{n \to \infty} \frac{1}{n} \log \psi^{n}_{S}(t) \in (-\infty, \infty] \quad (t \geq 0). 
    \end{align*}
\end{definition}
By using the pressure function in Definition \ref{PSfunc}, we define the regularity of CIFSs.
\begin{definition}[Regularity and Hereditarily regularity of CIFS]\label{RD}
    Let $S$ be a CIFS. We say that 
    \begin{itemize}
        \item $S$ is regular if there exists $t \geq 0$ such that $P_{S}(t) = 0$, and 
        \item $S$ is hereditarily regular if, for all $I^{\prime} \subset I$ with $\#(I \setminus I^{\prime}) < \infty $, $S^{\prime} := \{ \phi_{i} \colon X \to X \ |\ i \in I^{\prime} \} $ is regular. 
    \end{itemize}
    Here, for any set $A$, we denote by $\#(A)$ the cardinality of $A$.
\end{definition}
Note that if a CIFS $S$ is hereditarily regular then $S$ is regular (see \cite{MU}). 

We finally recall the $h_{S}$-conformal measure of $S$. 
If a CIFS $S$ is regular, there is the following `nice' probability measure $m_{S}$ ($h_{S}$-conformal measure of $S$) on $J_S$. 
Indeed, we often use $m_{S}$ in order to estimate 
the packing measure of the limit set of CIFSs. 
\begin{theorem}[\cite{MU} Lemma 3.13] \label{existenceofconformalmeasure}
Let $S$ be a CIFS. 
If $S$ is regular, then there exists the unique Borel probability measure $m_{S}$ on $X$ such that the following properties hold. 
\begin{enumerate}
\item $m_{S}(J_{S}) = 1$. 
\item For all Borel subset $A$ on $X$ and $i \in I$, $m_{S}(\phi_{i}(A)) = \int_{A}|\phi_{i}^{\prime}(y)|^{h_{S}} m_{S} (\mathrm{d} y) $. 
\item For all $i, j \in I$ with $i \neq j$, $m_{S}(\phi_{i}(X) \cap \phi_{j}(X) ) = 0$. 
\end{enumerate}
\end{theorem}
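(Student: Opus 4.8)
The plan is to realize $m_{S}$ as an eigenmeasure of the dual of the Perron--Frobenius (transfer) operator at the exponent $h_{S}$, and then to read off the three listed properties from the eigen-equation. Throughout I use that regularity forces $P_{S}(h_{S})=0$, and in particular (via the proposition relating $P_{S}$ and $\psi_{S}^{1}$) that $\psi_{S}^{1}(h_{S})<\infty$; this finiteness is what makes the infinite-alphabet sums below convergent. Define the transfer operator $\mathcal{L}\colon C(X)\to C(X)$ by
\[
\mathcal{L}g(x):=\sum_{i\in I}|\phi_{i}'(x)|^{h_{S}}\,g(\phi_{i}(x)).
\]
Since $\sum_{i}|\phi_{i}'(x)|^{h_{S}}\le \psi_{S}^{1}(h_{S})<\infty$ and each summand is continuous, $\mathcal{L}$ is a well-defined bounded positive operator; by multiplicativity of $|\phi_{w}'|$ under composition one checks $\mathcal{L}^{n}\mathbf{1}(x)=\sum_{w\in I^{n}}|\phi_{w}'(x)|^{h_{S}}$, which by the BDP is comparable to $\psi_{S}^{n}(h_{S})$ up to the factor $\tilde{K}^{h_{S}}$.

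First I would prove existence. The set $\mathcal{P}(X)$ of Borel probability measures on the compact space $X$ is convex and weak-$*$ compact, and the map $\mu\mapsto (\mathcal{L}^{*}\mu)/\bigl(\int \mathcal{L}\mathbf{1}\,\mathrm{d}\mu\bigr)$ is weak-$*$ continuous from $\mathcal{P}(X)$ into itself; the Schauder--Tychonoff fixed point theorem then yields $m_{S}\in\mathcal{P}(X)$ with $\mathcal{L}^{*}m_{S}=\lambda\, m_{S}$, where $\lambda=\int\mathcal{L}\mathbf{1}\,\mathrm{d}m_{S}>0$. Iterating gives $\lambda^{n}=\int \mathcal{L}^{n}\mathbf{1}\,\mathrm{d}m_{S}$, and combining this with $\mathcal{L}^{n}\mathbf{1}\asymp\psi_{S}^{n}(h_{S})$ and taking $\tfrac1n\log$ yields $\log\lambda=\lim_{n}\tfrac1n\log\psi_{S}^{n}(h_{S})=P_{S}(h_{S})=0$, so $\lambda=1$. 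Property $2$ is then immediate: testing $\mathcal{L}^{*}m_{S}=m_{S}$ against (continuous approximations of) $\mathbf{1}_{\phi_{i}(A)}$ and using injectivity in the OSC to kill all terms $j\neq i$ gives $m_{S}(\phi_{i}(A))=\int_{A}|\phi_{i}'|^{h_{S}}\,\mathrm{d}m_{S}$.

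Next I would extract properties $3$ and $1$ from property $2$ and the OSC. For $i\neq j$, any point of $\phi_{i}(X)\cap\phi_{j}(X)$ must be the image of a boundary point (otherwise it would lie in $\phi_{i}(\inter X)\cap\phi_{j}(\inter X)=\emptyset$), so the overlap is contained in $\bigcup_{k}\phi_{k}(\partial X)$; by property $2$, $m_{S}(\phi_{k}(\partial X))\asymp\|\phi_{k}'\|_{X}^{h_{S}}\,m_{S}(\partial X)$, so it suffices to show $m_{S}(\partial X)=0$, which follows from the cone condition together with conformality (the measure concentrates on the limit set and cannot charge the boundary). Granting property $3$, summing property $2$ over $i$ gives $\sum_{i}m_{S}(\phi_{i}(X))=\int\mathcal{L}\mathbf{1}\,\mathrm{d}m_{S}=\lambda=1=m_{S}(X)$ with $m_{S}$-essentially disjoint images, hence $m_{S}\bigl(\bigcup_{|w|=n}\phi_{w}(X)\bigr)=1$ for every $n$; since these sets decrease to the set of points admitting an infinite coding, continuity from above gives $m_{S}(J_{S})=1$ (the at most countable set $\overline{J_{S}}\setminus J_{S}$ being $m_{S}$-null as $m_{S}$ is non-atomic).

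Finally, uniqueness. If $m$ and $m'$ both satisfy $(1)$--$(3)$, then property $2$ and the BDP give $m(\phi_{w}(X))\asymp\|\phi_{w}'\|_{X}^{h_{S}}\asymp m'(\phi_{w}(X))$ with constants independent of $w$, so $m$ and $m'$ are boundedly mutually absolutely continuous on the cylinder algebra and hence on all Borel sets. The Radon--Nikodym derivative $\mathrm{d}m/\mathrm{d}m'$ is invariant under the natural cylinder dynamics, and the ergodicity of the conformal measure (a consequence of the bounded distortion, which makes the system quasi-Bernoulli) forces this derivative to be constant; as both are probability measures the constant is $1$, so $m=m'$. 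The hard part will be the infinite-alphabet bookkeeping: verifying that no mass escapes so that the fixed point actually lives on $J_{S}$ (the support statement $m_{S}(J_{S})=1$ and the boundary-null claim $m_{S}(\partial X)=0$), together with the ergodicity needed for uniqueness. The eigenvalue identification $\lambda=1$ also quietly uses $P_{S}(h_{S})=0$, i.e.\ that $h_{S}$ is the zero of the pressure; if that identification is not yet available here, one instead builds the eigenmeasure at the pressure-zero exponent $t_{0}$ supplied by regularity and invokes Bowen's formula $t_{0}=h_{S}$.
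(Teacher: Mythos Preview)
The paper does not prove this theorem; it is quoted verbatim from \cite{MU} (their Lemma~3.13) and used as a black box, so there is no in-paper argument to compare against. Your outline is essentially the original Mauldin--Urba\'nski strategy: build the transfer operator at the exponent $h_{S}$, apply Schauder--Tychonoff on $\mathcal{P}(X)$ to obtain an eigenmeasure of $\mathcal{L}^{*}$, identify the eigenvalue as $e^{P_{S}(h_{S})}=1$ via regularity, and then read off the three properties.

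Two points in your sketch do need repair. First, the parenthetical ``the at most countable set $\overline{J_{S}}\setminus J_{S}$'' is false for general CIFSs; that countability is a special feature of the generalized complex continued fraction systems treated later in this paper, not of arbitrary infinite CIFSs, and it is not how one proves $m_{S}(J_{S})=1$. The correct route is the one you also gesture at: from property~3 one has $m_{S}\bigl(\bigcup_{|w|=n}\phi_{w}(X)\bigr)=1$ for every $n$, and then for $m_{S}$-a.e.\ point the null-overlap condition forces the level-$n$ cylinders containing it to be nested, yielding a bona fide infinite code and hence membership in $J_{S}$; no K\"onig-type compactness (which fails for infinite $I$) is invoked. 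Second, the step $m_{S}(\partial X)=0$ is genuine work in the infinite-alphabet setting and cannot be dispatched with ``follows from the cone condition together with conformality''; in \cite{MU} this is a separate lemma combining the cone condition, conformality, and a covering/volume argument. Since your derivations of both property~2 (killing the $j\neq i$ terms when testing against $\mathbf{1}_{\phi_{i}(A)}$) and property~3 rely on it, it must be established first in the logical order rather than afterwards.
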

We call $m_{S}$  the  $h_{S}$-conformal measure of $S$. 
As we mentioned above, by the existence of the conformal measure of CIFSs, we estimate the packing measure and obtain the following key theorem to prove Theorem \ref{main2}. 

\begin{theorem}[\cite{MU} Lemma 4.10]\label{packingmeasurefiniteness}
Let $S$ be a regular CIFS and $m_{S}$ be the $h_{S}$-conformal measure of $S$. 
Suppose that there exist $L >0$, $\xi > 0$ and $\gamma \geq 1$ such that for all $b \in I$ and $r > 0$ with $\gamma \cdot \diam \phi_{b}(X) \leq r \leq \xi$, there exists $x_{0} \in \phi_{b}(V)$ such that $m_{S}(B(x_{0}, r)) \geq L r^{h_{S}}$, where $B(x_{0}, r)$ is the open ball with the center $x_{0}$ and the radius $r$ with respect to the Euclidean distance in $\mathbb{R}^{d}$. 
Then, we have $\mathcal{P}^{h_{S}}(J_{S}) < \infty$. 
\end{theorem}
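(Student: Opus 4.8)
The plan is to deduce the finiteness of $\mathcal{P}^{h_S}(J_S)$ from a uniform positive lower bound on the lower $h_S$-density of the conformal measure $m_S$ along $J_S$, and then to invoke the standard packing density comparison principle. Recall this principle: if $\mu$ is a finite Borel measure on $\mathbb{R}^d$, $s>0$, and $E$ is a Borel set with
\[
\liminf_{r\to 0^{+}}\frac{\mu(B(y,r))}{r^{s}}\geq \kappa
\]
for some constant $\kappa>0$ and every $y\in E$, then $\mathcal{P}^{s}(E)\leq C_{d}\,\kappa^{-1}\,\mu(E)<\infty$. The idea of its proof is elementary: for any packing of $E$ by disjoint balls $B(y_i,r_i)$ of small radius, the density bound gives $\sum_i(2r_i)^{s}\leq 2^{s}\kappa^{-1}\sum_i\mu(B(y_i,r_i))\leq 2^{s}\kappa^{-1}\mu(\mathbb{R}^d)$ because the balls are disjoint; a decomposition of $E$ into the level sets on which the density bound holds uniformly, together with the definition of the packing premeasure and measure, upgrades this to the stated comparison. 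Applying this with $\mu=m_S$, $s=h_S$, $E=J_S$, and using $m_S(J_S)=1$ from Theorem \ref{existenceofconformalmeasure}, it suffices to produce $\kappa>0$ with $\liminf_{r\to 0}m_S(B(y,r))/r^{h_S}\geq\kappa$ for all $y\in J_S$.

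Next I would translate the hypothesis into this lower-density bound. Fix $y=\pi_S(\omega)\in J_S$ with $\omega=\omega_1\omega_2\cdots\in I^{\infty}$. For each $n$ consider the cylinder $\phi_{\omega|_n}(X)$, whose diameter tends to $0$ and, by the bounded distortion property together with conformality, satisfies $\diam\phi_{\omega|_n}(X)\asymp\|\phi_{\omega|_n}'\|_X$. Apply the hypothesis to the single letter $a=\omega_n\in I$: there are $x_0\in\phi_{a}(V)$ and $r_0$ with $\gamma\diam\phi_{a}(X)\leq r_0\leq\xi$ and $m_S(B(x_0,r_0))\geq L r_0^{h_S}$. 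I would then push this estimate forward under $\psi:=\phi_{\omega|_{n-1}}$: using the conformality of $\psi$ with the bounded distortion property, $\psi(B(x_0,r_0))$ is comparable to a ball centered at $\psi(x_0)$ of radius $\rho\asymp\|\psi'\|_X r_0$, and the change-of-variables property (2) of $m_S$ in Theorem \ref{existenceofconformalmeasure} gives
\[
m_S\big(B(\psi(x_0),\rho)\big)\geq m_S\big(\psi(B(x_0,r_0))\big)=\int_{B(x_0,r_0)}|\psi'|^{h_S}\,dm_S\geq\tilde{K}^{-h_S}\|\psi'\|_X^{h_S}\,L\,r_0^{h_S}\asymp\rho^{h_S}.
\]
Since $\psi(x_0)\in\phi_{\omega|_n}(V)$ and $y\in\phi_{\omega|_n}(X)$, and since $\rho\gtrsim\diam\phi_{\omega|_n}(X)\geq|\psi(x_0)-y|$, the ball $B(\psi(x_0),\rho)$ lies in $B(y,C\rho)$ for a uniform constant $C$, yielding $m_S(B(y,C\rho))\gtrsim\rho^{h_S}\asymp(C\rho)^{h_S}$ with constants independent of $y$ and $n$.

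Finally I would pass from this estimate at the discrete sequence of radii $C\rho_n\to 0$ to the genuine $\liminf$ over all small radii. Because consecutive scales $\rho_n$ have ratios bounded above and below (the uniform contraction constant $c$ and the constant $\tilde{K}$ control $\|\phi_{\omega|_n}'\|_X/\|\phi_{\omega|_{n-1}}'\|_X$), monotonicity of $R\mapsto m_S(B(y,R))$ interpolates the bound to every sufficiently small $R$, producing a uniform $\kappa>0$. Combined with the first step this gives $\mathcal{P}^{h_S}(J_S)<\infty$.

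The main obstacle is the middle step: transferring the single-letter ball estimate to a uniform lower density valid at every point of $J_S$ and at every small scale. The single-letter hypothesis only controls scales comparable to the finitely-shaped diameters $\diam\phi_a(X)$, so reaching arbitrarily fine scales at a fixed $y$ forces one to propagate the estimate through deep cylinders via conformality and bounded distortion; the delicate points are (i) showing the conformal image of a ball is sandwiched between balls of comparable radii, which requires $\xi$ small enough that the relevant balls stay inside $V$, and (ii) checking that all comparison constants and scale ratios are uniform in the word $\omega$ and the level $n$, so that $\kappa$ does not degenerate.
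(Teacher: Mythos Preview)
The paper does not prove this statement; it is quoted from \cite{MU} (their Lemma~4.10) and invoked as a black box, so there is no in-paper proof to compare against.

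Your sketch follows the correct architecture---push the single-letter ball estimate down cylinders via the conformal change of variables for $m_S$, obtain a uniform lower $h_S$-density of $m_S$ on $J_S$, and invoke the density/packing comparison---and this is essentially how the argument in \cite{MU} runs. But your interpolation step contains a genuine gap. You assert that the consecutive scales $\rho_n$ have ratios bounded below because ``the uniform contraction constant $c$ and the constant $\tilde K$ control $\|\phi_{\omega|_n}'\|_X/\|\phi_{\omega|_{n-1}}'\|_X$''. Up to BDP constants that ratio equals $\|\phi_{\omega_n}'\|_X$, and in an \emph{infinite} CIFS this has no positive lower bound: in the continued-fraction systems of the present paper $\|\phi_b'\|_X\asymp|b|^{-2}\to 0$ as $|b|\to\infty$. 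Hence the gaps between successive $\rho_n$ can be arbitrarily large, and monotonicity of $R\mapsto m_S(B(y,R))$ cannot bridge them; your lower density would degenerate to $0$ along such words $\omega$.

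The repair is precisely the reason the hypothesis is formulated for \emph{every} $r$ in the whole window $[\gamma\,\diam\phi_b(X),\,\xi]$, not for a single $r_0$. Given a target radius $R$, choose $n$ least with $\diam\phi_{\omega|_n}(X)\leq R/C$, so that $\diam\phi_{\omega|_{n-1}}(X)>R/C$; then set $r_0\asymp R/\|\phi_{\omega|_{n-1}}'\|_X$. With $C$ chosen appropriately in terms of $\gamma,\xi,\tilde K$ one checks that this $r_0$ lies in the admissible window $[\gamma\,\diam\phi_{\omega_n}(X),\,\xi]$ (the lower bound uses $\diam\phi_{\omega_n}(X)\asymp\diam\phi_{\omega|_n}(X)/\diam\phi_{\omega|_{n-1}}(X)$, the upper bound uses $\|\phi_{\omega|_{n-1}}'\|_X\asymp\diam\phi_{\omega|_{n-1}}(X)>R/C$). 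The push-forward radius $\rho\asymp\|\phi_{\omega|_{n-1}}'\|_X\,r_0$ is then directly comparable to $R$, and your change-of-variables estimate gives $m_S(B(y,R))\gtrsim R^{h_S}$ with constants independent of $y$ and $R$---no interpolation across scales is needed.
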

\section{CIFSs of generalized complex continued fractions}
In this section, we present some results on the CIFSs of generalized complex continued fractions introduced in the papers \cite{IOS} and \cite{IS}, which are needed to prove the results of this paper. 
Note that these CIFSs are interesting examples of infinite CIFSs. 
Rest of this paper, we denote by $B(y, r) \subset \mathbb{R}^{d} (d \in \mathbb{N})$ the open ball with center $y \in \mathbb{R}^{d}$ and radius $r > 0$, with respect to the $d$-dimensional Euclidean norm and we identify $\mathbb{C}$ with $\mathbb{R}^{2}$. 

We first present the following lemma shown in \cite{IOS} and \cite{IS} in order to prove Theorem \ref{main2}. 
\begin{lemma}[Lemmas 3.1, 3.3 and 3.4 in \cite{IS}]\label{GCCFisCIFS}
    For all $\tau \in A_{0}$, $S_{\tau}$ is a hereditarily regular CIFS. 
    In addition, we have $1 < h_{\tau} < 2$. 
\end{lemma}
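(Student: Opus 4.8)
The plan is to verify the six conditions of Definition~\ref{CIFSDEF} for each $S_{\tau}$ and then to deduce hereditary regularity and the bound $1<h_{\tau}<2$ from the behaviour of the pressure function. For the CIFS conditions, recall that $X$ is the closed disc of radius $\tfrac12$ about $\tfrac12$, so $z\in X\setminus\{0\}\iff\mathrm{Re}(1/z)\geq1$ and $z\in\inter(X)\setminus\{0\}\iff\mathrm{Re}(1/z)>1$; in particular $\mathrm{Re}(z)\geq0$ on $X$, $\mathrm{Re}(z)>0$ on $\inter(X)$, $\diam X=1$, and $|\mathrm{Im}\,z|<\tfrac12$ on $\inter(X)$. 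For $b=m+n\tau$ with $m,n\geq1$, the assumptions $u\geq0$, $v\geq1$ give $\mathrm{Re}(b)\geq1$, $\mathrm{Im}(b)\geq1$, hence $-b\notin X$ and $\mathrm{dist}(-b,X)=|b+\tfrac12|-\tfrac12\geq|\tfrac32+\tau|-\tfrac12\geq\tfrac{\sqrt{13}-1}{2}=:c_{0}>1$, so $\|\phi_{b}'\|_{X}=(|b+\tfrac12|-\tfrac12)^{-2}\leq c_{0}^{-2}<1$. This gives injectivity (the $\phi_{b}$ are M\"obius) and uniform contractivity, and, choosing an open disc $V\supset X$ small enough that still $-b\notin V$ for all $b$, conformality (each $\phi_{b}$ is holomorphic on $V$, hence a conformal $C^{1+\epsilon}$ diffeomorphism, with $\phi_{b}(V)\subset V$); the cone condition is automatic for a disc. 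For the OSC, $z\in\inter(X)$ gives $\mathrm{Re}(z+b)>1$, hence $\phi_{b}(z)\in\inter(X)$, and $\phi_{b}(z)=\phi_{b'}(z')$ with $z,z'\in\inter(X)$ forces $b-b'=z'-z$, impossible for $b\neq b'$ since $b-b'=(m-m')+(n-n')\tau$ has $|\mathrm{Im}(b-b')|\geq v\geq1$ when $n\neq n'$ and is a nonzero integer when $n=n'$, while $|z'-z|<1$ and $|\mathrm{Im}(z'-z)|<1$. The BDP follows from the chain rule $\log|\phi_{w}'(x)|=\sum_{j}\log|\phi_{w_{j}}'(x_{j})|$ (where $x_{j}$ are the intermediate images of $x$ along the composition), the uniform bound $|\phi_{b}''/\phi_{b}'|(z)=2/|z+b|\leq\mathrm{const}$ on $V$, and the geometric decay of $|x_{j}-y_{j}|$ from uniform contractivity, by a telescoping sum.

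Granting that $S_{\tau}$ is a CIFS, I would then compute the one-scale data: from $\|\phi_{m+n\tau}'\|_{X}=(|m+n\tau+\tfrac12|-\tfrac12)^{-2}$ and $|m+n\tau+\tfrac12|^{2}=(m+\tfrac12+nu)^{2}+(nv)^{2}$, the inequalities $u\geq0$, $v\geq1$, $|m+n\tau+\tfrac12|\geq\tfrac32$ give $\|\phi_{m+n\tau}'\|_{X}\asymp_{\tau}(m^{2}+n^{2})^{-1}$, hence $\psi^{1}_{S_{\tau}}(t)\asymp_{\tau}\sum_{m,n\geq1}(m^{2}+n^{2})^{-t}$, which is finite if and only if $t>1$. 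Thus $\theta_{S_{\tau}}=1$ and $\psi^{1}_{S_{\tau}}(1)=\infty$, and the same holds for every subsystem $S'=\{\phi_{b}\mid b\in I'\}$ with $I_{\tau}\setminus I'$ finite. By Proposition~\ref{logsubadditive}, $P_{S'}(t)\geq\log\psi^{1}_{S'}(t)-2t\log\tilde K\to\infty$ as $t\downarrow1$; since moreover $P_{S'}(2)\leq\log\psi^{1}_{S'}(2)<0$ (shown below) and $P_{S'}$ is finite, continuous and strictly decreasing on $(1,\infty)$, there is a unique $t_{0}(S')\in(1,2)$ with $P_{S'}(t_{0}(S'))=0$. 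Hence every such $S'$ is regular, i.e.\ $S_{\tau}$ is hereditarily regular; applying this to $S'=S_{\tau}$ and using the Mauldin--Urba\'{n}ski formula for the Hausdorff dimension, $h_{\tau}=t_{0}(S_{\tau})\in(1,2)$, so in particular $h_{\tau}>1$.

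It thus remains to show $\psi^{1}_{S_{\tau}}(2)<1$ for every $\tau\in A_{0}$. Each summand $(|m+n\tau+\tfrac12|-\tfrac12)^{-4}$ is non-increasing in $u$ and in $v$, so $\psi^{1}_{S_{\tau}}(2)\leq\psi^{1}_{S_{i}}(2)=\sum_{m,n\geq1}\bigl(\sqrt{(m+\tfrac12)^{2}+n^{2}}-\tfrac12\bigr)^{-4}$; for this last series one evaluates the finitely many terms with $m+n$ small explicitly and bounds the remaining tail using $\sqrt{(m+\tfrac12)^{2}+n^{2}}-\tfrac12\geq\tfrac{9}{10}\sqrt{m^{2}+n^{2}}$ once $m^{2}+n^{2}$ is large, together with an integral comparison for $\sum(m^{2}+n^{2})^{-2}$ over that range, to conclude $\psi^{1}_{S_{i}}(2)<1$. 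Then $P_{S_{\tau}}(2)<0$, and by the dimension formula together with the strict monotonicity of $P_{S_{\tau}}$ on $(1,\infty)$ this gives $h_{\tau}<2$.

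I expect the only genuinely delicate point to be this last inequality $\psi^{1}_{S_{i}}(2)<1$: it holds but is not wide, so one must separate a handful of explicitly computed lattice terms from a careful estimate of the tail. By contrast the six CIFS conditions — in particular the bounded distortion property for the M\"obius maps $\phi_{b}$ — form a routine, if somewhat lengthy, verification, and both $h_{\tau}>1$ and hereditary regularity follow with little extra effort once $\theta_{S_{\tau}}=1$ and $\psi^{1}_{S_{\tau}}(1)=\infty$ are established.
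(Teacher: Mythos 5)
This lemma is not actually proved in the paper: it is imported wholesale from \cite{IS} (Lemmas 3.1, 3.3 and 3.4 there), so there is no in-paper argument to compare against. Your proposal is a correct, self-contained reconstruction along the standard Mauldin--Urba\'{n}ski route, and I see no gap in it. The verification of the six CIFS conditions is sound: the characterization $z\in X\setminus\{0\}\iff \Re(1/z)\ge 1$ reduces $\phi_{b}(X)\subset X$ and the open set condition to one-line computations, and $\mathrm{dist}(-b,X)=|b+\tfrac12|-\tfrac12\ge(\sqrt{13}-1)/2>1$ gives uniform contractivity with a constant independent of $\tau$. The identification $\theta_{S_{\tau}}=1$ together with $\psi^{1}_{S_{\tau}}(1)=\infty$ (both stable under deleting finitely many indices) correctly yields hereditary regularity and $h_{\tau}>1$ via the lower bound $P_{S'}(t)\ge\log\psi^{1}_{S'}(t)-2t\log\tilde K$ from Proposition \ref{logsubadditive}, and your monotonicity reduction $\psi^{1}_{S_{\tau}}(2)\le\psi^{1}_{S_{i}}(2)$ is valid since $(m+\tfrac12+nu)^{2}+(nv)^{2}$ is increasing in $u\ge0$ and in $v\ge1$. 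You are also right that the only delicate point is the numerical bound $\psi^{1}_{S_{i}}(2)<1$: applying the crude estimate $\sqrt{(m+\tfrac12)^{2}+n^{2}}-\tfrac12\ge\bigl(1-\tfrac{1}{2\sqrt2}\bigr)\sqrt{m^{2}+n^{2}}$ to every term produces a prefactor of about $5.7$ against $\sum_{m,n\ge1}(m^{2}+n^{2})^{-2}\approx0.42$, which overshoots $1$, so the split you describe is genuinely needed; carrying it out (explicit evaluation of the lattice points with $m^{2}+n^{2}<25$, whose contribution is about $0.52$ and is dominated by the $(1,1)$ term $((\sqrt{13}-1)/2)^{-4}\approx0.35$, plus the $(9/10)^{-4}$-weighted tail, about $0.06$) gives roughly $0.58<1$ and closes the argument.
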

In addition, in order to prove Theorem \ref{main2}, we next 
prove the following lemma (a slight modification of Lemma 3.2 in \cite{IS}). 
For the readers, we give a proof of Lemma \ref{basicestimate}. 
\begin{lemma}\label{basicestimate}
Let $\tau \in A_{0}$. 
Then, there exists $K_{0} \geq 1$ such that for all $K \geq K_{0}$ and $a \in I_{\tau}$, the following properties hold. 
\begin{enumerate}
    \item $\phi_{a}(X) \subset B(0, K |a|^{-1})$. 
    \item $K^{-1} |a|^{-2} \leq |\phi_{a}^{\prime}(z)| \leq K |a|^{-2}$ for each $z \in X$. 
    \item $K^{-1} |a|^{-2} \leq \diam{\phi_{a}(X)}$. 
\end{enumerate}
\end{lemma}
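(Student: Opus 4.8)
The plan is to derive all three estimates from a single uniform two-sided comparison between $|z+a|$ and $|a|$. First I would establish that
\begin{align*}
\tfrac{1}{2}\,|a| \;\le\; |z+a| \;\le\; 2\,|a| \qquad \text{for all } z \in X \text{ and } a \in I_{\tau}.
\end{align*}
Writing $\tau = u+iv \in A_{0}$ (so $u \ge 0$, $v \ge 1$) and $a = m+n\tau$ with $m,n \ge 1$, the upper bound follows from $|z| \le 1$ on $X$ together with $|a|^{2} = (m+nu)^{2} + (nv)^{2} \ge 2$, so that $|z+a| \le |z| + |a| \le 1 + |a| \le \bigl(1 + \tfrac{1}{\sqrt2}\bigr)|a| < 2|a|$. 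For the lower bound I would use that $z \in X$ forces $\mathrm{Re}(z) \in [0,1]$ and $\mathrm{Im}(z) \in [-\tfrac12,\tfrac12]$, hence
\begin{align*}
|z+a|^{2} = (\mathrm{Re}(z)+m+nu)^{2} + (\mathrm{Im}(z)+nv)^{2} \;\ge\; (m+nu)^{2} + \bigl(nv - \tfrac12\bigr)^{2} \;\ge\; (m+nu)^{2} + \tfrac14(nv)^{2} \;\ge\; \tfrac14\,|a|^{2},
\end{align*}
where the second inequality uses $nv \ge 1$. In particular $\mathrm{Im}(z+a) \ge \tfrac12 > 0$, so $\phi_{a}$ is well defined (on $X$, indeed on a neighbourhood of $X$). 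This comparison is the one step where the arithmetic structure of $I_{\tau}$ is genuinely used, and it is the part I would be most careful about: one cannot simply appeal to $|a| \to \infty$, since $K_{0}$ must be valid for every $a \in I_{\tau}$ simultaneously; the substantive point is that $\mathrm{Im}(a) \ge 1$ keeps $z+a$ bounded away from $0$ by a fixed multiple of $|a|$, uniformly in $m,n$.

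Granting the comparison, the three items are immediate from $\phi_{a}(z) = (z+a)^{-1}$ and $\phi_{a}'(z) = -(z+a)^{-2}$ (conformality of $\phi_{a}$ means the operator norm $|\phi_{a}'(z)|$ equals $|z+a|^{-2}$). For (2), $\tfrac14|a|^{-2} \le |\phi_{a}'(z)| = |z+a|^{-2} \le 4|a|^{-2}$. For (1), $|\phi_{a}(z)| = |z+a|^{-1} \le 2|a|^{-1}$, so $\phi_{a}(X) \subset B(0,3|a|^{-1})$. For (3), since $0,1 \in X$,
\begin{align*}
\diam \phi_{a}(X) \;\ge\; |\phi_{a}(0) - \phi_{a}(1)| = \Bigl|\tfrac{1}{a} - \tfrac{1}{1+a}\Bigr| = \frac{1}{|a|\,|1+a|} \;\ge\; \frac{1}{2}\,|a|^{-2},
\end{align*}
using $|1+a| \le 1 + |a| \le 2|a|$. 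Taking $K_{0} := 4$, every $K \ge K_{0}$ and every $a \in I_{\tau}$ satisfy all three inequalities, which finishes the proof. I would also remark in passing that $K_{0}$ may in fact be chosen independently of $\tau$, although the statement only asks for a $\tau$-dependent constant; no part of the argument presents a real obstacle beyond the uniform comparison isolated above.
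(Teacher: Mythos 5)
Your proof is correct, and it takes a genuinely different route from the paper's. The paper derives all three items from the Bounded Distortion Property of the CIFS $S_{\tau}$ (established earlier, in Lemma 3.1 via \cite{IS}): it fixes the BDP constant $C\ge 1$, sets $K_{0}:=C$, and exploits the identity $|\phi_{a}'(z)|/|\phi_{a}'(0)| = |a|^{2}/|a+z|^{2}$ together with $|\phi_{a}'(0)|=|a|^{-2}$ to convert the BDP into exactly the comparison $|a+z|\asymp|a|$ that you prove by hand. Your argument instead establishes that comparison directly from the geometry of $X$ (so $\mathrm{Re}(z)\in[0,1]$, $\mathrm{Im}(z)\in[-\tfrac12,\tfrac12]$, $|z|\le1$) and the structure of $I_{\tau}$ (namely $m+nu\ge1$ and $nv\ge1$, whence $|a|\ge\sqrt2$ and $(nv-\tfrac12)^{2}\ge\tfrac14(nv)^{2}$); all the steps check out, including the diameter bound via the pair $0,1\in X$. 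What your approach buys is self-containedness and an explicit, $\tau$-independent constant $K_{0}=4$, and it in effect re-proves the substance of the BDP for these particular maps; what the paper's approach buys is brevity, at the cost of quoting the BDP as a black box and obtaining a constant that is only known to exist (and a priori depends on $\tau$). Either proof is acceptable here.
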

\begin{proof}
    Let $\tau \in A_{0}$. 
    Note that by using the BDP, there exists a constant $C \geq 1$ such that for all $z, w \in X$, 
    \begin{align} \label{basicinequalityBDP}
        |\phi^{\prime}_{a}(z)| \leq C \cdot |\phi^{\prime}_{a}(w)|. 
    \end{align}
    We set $K_{0} := C ( \geq 1)$ and let $K \geq K_{0}$ and $a \in I_{\tau}$. 
    Then, by the inequality (\ref{basicinequalityBDP}) with $w = 0 \in X$, we have
    \begin{align*} 
        &|\phi_{a}(z)| \cdot |a| 
        = \frac{|a|}{|a+z|} 
        = \sqrt{\frac{|a|^{2}}{|a+z|^{2}}}
        = \sqrt{\frac{|\phi_{a}^{\prime}(z)|}{|\phi_{a}^{\prime}(0)|}}
        \leq \sqrt{C} 
        \leq K_{0} 
        \leq K 
    \end{align*}
    for each $z \in X$. 
    It follows that $\phi_{a}(X) \subset B(0, K |a|^{-1})$. 
    Also, by the inequality (\ref{basicinequalityBDP}), we have
    \begin{align*}
        & K^{-1} |a|^{-2}
        = K^{-1} |\phi_{a}^{\prime}(0)| 
        \leq C^{-1}|\phi_{a}^{\prime}(0)| 
        \leq |\phi_{a}^{\prime}(z)| \quad \text{and} \quad |\phi_{a}^{\prime}(z)| 
        \leq C |\phi_{a}^{\prime}(0)|
        \leq K |\phi_{a}^{\prime}(0)|  
        = K |a|^{-2}
    \end{align*}
    for each $z \in X$, 
    which deduce that $K^{-1} |a|^{-2} \leq |\phi_{a}^{\prime}(z)| \leq K |a|^{-2}$. 
Moreover, by the inequality (\ref{basicinequalityBDP}), we have 
\begin{align*}
    \diam{\phi_{a}(X)} \cdot |a|^{2}
    &\geq |\phi_{a}(z) - \phi_{a}(w)| \cdot |a|^{2}
    = \left| \frac{1}{z+a} - \frac{1}{w+a} \right| \cdot |a|^{2}
    =  \frac{|w - z|}{|z+a||w+a|} \cdot |a|^{2} \\
    &= |w - z| \cdot \sqrt{\frac{|a|^{2}}{|a+z|^{2}}} \cdot \sqrt{\frac{|a|^{2}}{|a+w|^{2}}}
    = |w - z| \cdot  \sqrt{\frac{|\phi_{a}^{\prime}(z)|}{|\phi_{a}^{\prime}(0)|}} \cdot \sqrt{\frac{|\phi_{a}^{\prime}(w)|}{|\phi_{a}^{\prime}(0)|}}
    \geq |w - z| \cdot C^{-1}
\end{align*}
for all $z, w \in X = B(1/2, 1/2)$. 
Since $\diam X = \sup\{|z-w| \ | \ z, w \in X\} = 1$, we obtain that $ \diam{\phi_{a}(X)} \geq C^{-1} |a|^{-2} \geq K^{-1} |a|^{-2}$. 
Therefore, we have proved our lemma. 
\end{proof}
Note that by Theorem \ref{existenceofconformalmeasure} and Lemma \ref{basicestimate}, for each $I \subset I_{\tau}$, we have
\begin{align}
    m_{\tau}\left( \bigcup_{a \in I} \phi_{a}(X) \right)
    &= \sum_{a \in I} m_{\tau}\left( \phi_{a}(X) \right)
    = \sum_{a \in I} \int_{X}|\phi_{a}^{\prime}(y)|^{h_{\tau}} m_{\tau} (\mathrm{d} y) \notag \\
    &\geq \sum_{a \in I} K^{-h_{\tau}} |a|^{-2h_{\tau}} m_{\tau}(X)
    = \sum_{a \in I} K^{-h_{\tau}} |a|^{-2h_{\tau}}. \label{useful_inequality}
\end{align}

Rest of this section, we recall notations and results used in the paper \cite{IS}. 
We identify $I_{\tau}$ with $\{ {}^{t}(s, t) \in \mathbb{R}^{2} \ | \ s+it \in I_{\tau} \}$ and $\mathbb{N}^{2}$ with $\{ {}^{t} (m, n) \in \mathbb{R}^{2} \ | \ m, n \in \mathbb{N} \}$, where for any matrix $A$, we denote by ${}^{t} A$ the transpose of $A$.  
For each $\tau = u + iv \in A_{0}$, we set 
\begin{align*}
    E_{\tau} := \left( \begin{array}{cc} 1 & u \\ 0 & v \end{array}\right) \quad \text{and} \quad F_{\tau} := {}^{t}E_{\tau} E_{\tau} = \left( \begin{array}{cc} 1 & u \\ u & |\tau|^{2} \end{array}\right).     
\end{align*}
Note that by direct calculations, $E_{\tau}\mathbb{N}^{2} = I_{\tau}$, $E_{\tau}$ is invertible and there exist the eigenvalues $\lambda_{1} > 0$ and $\lambda_{2} > 0$ of $F_{\tau}$ with $\lambda_{1} < \lambda_{2}$. 
Note that since $F_{\tau}$ is symmetric, there exist an eigenvector $v_{1} \in \mathbb{R}^{2}$ of $F_{\tau}$ with respect to $\lambda_{1}$ and an eigenvector $v_{2} \in \mathbb{R}^{2}$ of $F_{\tau}$ with respect to $\lambda_{2}$ such that $V_{\tau} := (v_{1}, v_{2})$ is an orthogonal matrix. 

For each $\tau \in A_{0}$, we set $N_{\tau} := \sqrt{2\lambda_{2}}/\sqrt{\lambda_{1}} + 1 \ (> 2)$. 
In addition, for each $\tau \in A_{0}$ and $l > 0$, we set
\begin{align*}
&D_{1}(\tau, l) := \{ ^{t}(x, y) \in \mathbb{R}^{2} \ | \ l^{2}/\lambda_{1} < x^{2} + y^{2} \leq (N_{\tau} l)^{2}/\lambda_{2} \} \quad \text{and} \\
& D_{2}(\tau, l) := \{ ^{t}(x, y) \in \mathbb{R}^{2} \ |\ l^{2} < x^{2} + y^{2} \leq (N_{\tau} l)^{2} \}. 
\end{align*}
Note that $l/\sqrt{\lambda_{1}} < (N_{\tau}l)/\sqrt{\lambda_{2}}$ for each $l > 0$ since $\sqrt{\lambda_{2}}/\sqrt{\lambda_{1}} < N_{\tau}$ and $l < N_{\tau} l$ for each $l > 0$ since $1 < N_{\tau}$. 

By these notations, we present the result in \cite{IS} (Lemma \ref{latticepointestimate}) and its slight modification (Lemma \ref{lattice_point_in_annulus}). 
For the readers we give a proof of Lemma \ref{lattice_point_in_annulus}. 
Note that Lemma \ref{lattice_point_in_annulus} is used in Case (3) in the proof of Theorem \ref{main2}. 
\begin{lemma}[Lemma 4.4 in \cite{IS}] \label{latticepointestimate}
    Let $\tau \in A_{0}$. 
    Then, there exist $\tilde{L}_{\tau}>0$ and $\tilde{C}_{\tau} > 0$ such that for all $l > \tilde{C}_{\tau}$, 
    \begin{align*}
        \#(I_{\tau} \cap D_{2}(\tau, l))
        \geq \tilde{L}_{\tau} l^{2} -  \frac{7N_{\tau}}{2\sqrt{\lambda_{2}}} l. 
    \end{align*}
\end{lemma}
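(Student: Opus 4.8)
The plan is to reduce the count over $I_{\tau}$ to a count over $\mathbb{N}^{2}$ using the linear identification $I_{\tau} = E_{\tau}\mathbb{N}^{2}$ together with the eigenvalue bounds for the symmetric matrix $F_{\tau} = {}^{t}E_{\tau}E_{\tau}$, and then to invoke Proposition \ref{latticepointlemma}.

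First I would transfer the problem to $\mathbb{N}^{2}$. For ${}^{t}(m,n) \in \mathbb{N}^{2}$ and $b = m+n\tau \in I_{\tau}$ one has $|b|^{2} = |E_{\tau}\,{}^{t}(m,n)|^{2} = {}^{t}(m,n)\,F_{\tau}\,{}^{t}(m,n)$, so that $\lambda_{1}(m^{2}+n^{2}) \le |b|^{2} \le \lambda_{2}(m^{2}+n^{2})$ since $\lambda_{1} < \lambda_{2}$ are the eigenvalues of $F_{\tau}$. Thus if ${}^{t}(m,n) \in D_{1}(\tau,R)$, i.e. $R^{2}/\lambda_{1} < m^{2}+n^{2} \le (N_{\tau}R)^{2}/\lambda_{2}$, then $|b|^{2} > \lambda_{1}\cdot R^{2}/\lambda_{1} = R^{2}$ and $|b|^{2} \le \lambda_{2}\cdot(N_{\tau}R)^{2}/\lambda_{2} = (N_{\tau}R)^{2}$, so $b \in D_{2}(\tau,R)$. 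Since $E_{\tau}$ is a bijection from $\mathbb{N}^{2}$ onto $I_{\tau}$, this gives $E_{\tau}(\mathbb{N}^{2} \cap D_{1}(\tau,R)) \subseteq I_{\tau} \cap D_{2}(\tau,R)$ and hence $|I_{\tau} \cap D_{2}(\tau,R)| \ge |\mathbb{N}^{2} \cap D_{1}(\tau,R)|$.

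Next I would count the $\mathbb{N}^{2}$-points in the annulus $D_{1}(\tau,R)$. Writing $N(\rho) := |\{{}^{t}(m,n)\in\mathbb{N}^{2} : m^{2}+n^{2} \le \rho^{2}\}|$ and putting $r_{1} := R/\sqrt{\lambda_{1}} < r_{2} := N_{\tau}R/\sqrt{\lambda_{2}}$, we have $|\mathbb{N}^{2} \cap D_{1}(\tau,R)| = N(r_{2}) - N(r_{1})$. Taking $\tilde{C}_{\tau} := 6\sqrt{\lambda_{1}}$ ensures $r_{1} \ge 6$ (so also $r_{2} > 6$) whenever $R > \tilde{C}_{\tau}$, so Proposition \ref{latticepointlemma} applied at both radii gives $N(r_{2}) - N(r_{1}) \ge \tfrac12(r_{2}^{2} - 7r_{2} + 7) - r_{1}^{2}$. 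Substituting $r_{1}^{2} = R^{2}/\lambda_{1}$, $r_{2}^{2} = N_{\tau}^{2}R^{2}/\lambda_{2}$ and $r_{2} = N_{\tau}R/\sqrt{\lambda_{2}}$ gives
\begin{align*}
|I_{\tau} \cap D_{2}(\tau,R)| \ge \left(\frac{N_{\tau}^{2}}{2\lambda_{2}} - \frac{1}{\lambda_{1}}\right)R^{2} - \frac{7N_{\tau}}{2\sqrt{\lambda_{2}}}\,R + \frac{7}{2},
\end{align*}
and setting $\tilde{L}_{\tau} := \frac{N_{\tau}^{2}}{2\lambda_{2}} - \frac{1}{\lambda_{1}}$ and dropping the nonnegative term $7/2$ yields the claim.

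The one point that needs care — and the only real obstacle — is checking $\tilde{L}_{\tau} > 0$. This is exactly where the definition $N_{\tau} = \sqrt{2\lambda_{2}}/\sqrt{\lambda_{1}} + 1$ enters: it forces $N_{\tau}^{2} > 2\lambda_{2}/\lambda_{1}$, hence $N_{\tau}^{2}/(2\lambda_{2}) > 1/\lambda_{1}$, i.e. $\tilde{L}_{\tau} > 0$. Conceptually, Proposition \ref{latticepointlemma} gives a sharp quadratic lower bound only for the outer disk $N(r_{2})$ while for the inner disk one can only subtract the crude bound $N(r_{1}) \le r_{1}^{2}$, so the estimate survives only because the ratio $r_{2}/r_{1}$ is bounded below by a constant strictly larger than $1$ — precisely the role played by $N_{\tau}$ — which keeps the coefficient of $R^{2}$ a positive constant. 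Everything else is the eigenvalue inequality for the positive-definite matrix $F_{\tau}$ and elementary algebra.
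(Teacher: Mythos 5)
Your proof is correct and follows essentially the same route the paper intends: the paper imports this lemma from \cite{IS}, and the surrounding definitions of $D_{1}(\tau,R)$, $D_{2}(\tau,R)$ and $N_{\tau}$ are set up precisely for the inclusion $E_{\tau}(\mathbb{N}^{2}\cap D_{1}(\tau,R))\subset I_{\tau}\cap D_{2}(\tau,R)$ followed by the count via Proposition \ref{latticepointlemma}; the fact that your linear term comes out exactly as $\frac{7N_{\tau}}{2\sqrt{\lambda_{2}}}R$ confirms the match. Your verification that $\tilde{L}_{\tau}=\frac{N_{\tau}^{2}}{2\lambda_{2}}-\frac{1}{\lambda_{1}}>0$ via the definition of $N_{\tau}$ is the right key point and is handled correctly.
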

\begin{lemma} \label{lattice_point_in_annulus}
    Let $\tau \in A_{0}$. 
    Then, there exist $Q_{\tau} > 0$ and $C_{\tau} > 0$ such that for all $l \geq C_{\tau}$, we have 
    \begin{align}\label{latticepointessentialestimate}
        \#(I_{\tau} \cap D_{2}(\tau, l)) >  Q_{\tau} l^{2}. 
    \end{align}
\end{lemma}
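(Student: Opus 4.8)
The plan is to derive Lemma \ref{lattice_point_in_annulus} as a direct consequence of Lemma \ref{latticepointestimate} by absorbing the linear error term into the quadratic main term at the cost of shrinking the constant and enlarging the threshold. Concretely, Lemma \ref{latticepointestimate} gives $\tilde{C}_\tau > 0$ and $\tilde{L}_\tau > 0$ with
\begin{align*}
    |I_{\tau} \cap D_{2}(\tau, R)| \geq \tilde{L}_{\tau} R^{2} - \frac{7N_{\tau}}{2\sqrt{\lambda_{2}}} R
\end{align*}
for all $R > \tilde{C}_\tau$. First I would set $Q_\tau := \tilde{L}_\tau / 2$ (any fixed fraction strictly between $0$ and $\tilde{L}_\tau$ works). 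Then I would choose $C_\tau$ large enough that two things hold simultaneously for all $R \geq C_\tau$: (i) $R > \tilde{C}_\tau$, so that Lemma \ref{latticepointestimate} applies; and (ii) $\frac{7N_{\tau}}{2\sqrt{\lambda_{2}}} R \leq \frac{\tilde{L}_\tau}{2} R^{2}$, i.e. $R \geq \frac{7N_{\tau}}{\tilde{L}_\tau \sqrt{\lambda_{2}}}$. Taking $C_\tau := \max\{\tilde{C}_\tau + 1,\ 7N_\tau/(\tilde{L}_\tau\sqrt{\lambda_2})\}$ does the job.

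With these choices, for every $R \geq C_\tau$ we have
\begin{align*}
    |I_{\tau} \cap D_{2}(\tau, R)| \geq \tilde{L}_{\tau} R^{2} - \frac{7N_{\tau}}{2\sqrt{\lambda_{2}}} R
    \geq \tilde{L}_{\tau} R^{2} - \frac{\tilde{L}_\tau}{2} R^{2}
    = \frac{\tilde{L}_\tau}{2} R^{2} = Q_\tau R^{2},
\end{align*}
and since the inequality \eqref{latticepointessentialestimate} asks for a strict inequality, I would instead take $Q_\tau := \tilde{L}_\tau/4$ and argue $|I_\tau \cap D_2(\tau,R)| \geq \frac{\tilde L_\tau}{2} R^2 > \frac{\tilde L_\tau}{4} R^2 = Q_\tau R^2$, using $R^2 > 0$. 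This gives the claim.

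There is essentially no obstacle here; the lemma is a routine strict-inequality repackaging of Lemma \ref{latticepointestimate}, and the only thing to be careful about is bookkeeping the threshold $C_\tau$ so that both the applicability condition $R > \tilde C_\tau$ and the absorption condition on the linear term are met, and ensuring the inequality is strict as stated. If one prefers to invoke Proposition \ref{latticepointlemma} and the geometry of $D_2(\tau,R)$ directly rather than citing Lemma \ref{latticepointestimate}, the same absorption trick applies verbatim to the lower bound $\frac{R^2 - 7R + 7}{2}$ appearing there (after the linear change of coordinates by $E_\tau$ that identifies $I_\tau$ with $\mathbb{N}^2$), but citing Lemma \ref{latticepointestimate} is the cleanest route.
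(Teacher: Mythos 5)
Your proposal is correct and follows essentially the same route as the paper: set $Q_\tau$ to a fixed fraction of $\tilde{L}_\tau$ and choose $C_\tau$ large enough that the linear error term in Lemma \ref{latticepointestimate} is absorbed into the quadratic main term (the paper keeps $Q_\tau = \tilde{L}_\tau/2$ and gets strictness by making the absorption inequality strict, while you pass to $\tilde{L}_\tau/4$; both are fine).
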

\begin{proof}
    Let $\tau \in A_{0}$ and we set $Q_{\tau} := \tilde{L}_{\tau}/2 > 0$, where $\tilde{L}_{\tau} > 0$ is the number in Lemma \ref{latticepointestimate}. 
    Note that there exists $C_{\tau} > \tilde{C}_{\tau}$ such that for all $l \geq C_{\tau}$, $ ( \tilde{L}_{\tau} l^{2})/2  -  (7 N_{\tau} l)/(2\sqrt{\lambda_{2}}) > 0$, which is equivalent to 
    \begin{align} \label{sufficientlarge_positive}
        \tilde{L}_{\tau} l^{2} -  \frac{7N_{\tau}}{2\sqrt{\lambda_{2}}} l > \frac{\tilde{L}_{\tau}}{2} l^{2},  
    \end{align}
    where $\tilde{C}_{\tau} > 0$ is the number in Lemma \ref{latticepointestimate}. 
    By Lemma \ref{latticepointestimate} and the inequality (\ref{sufficientlarge_positive}), we deduce that
    \begin{align*}
        \#(I_{\tau} \cap D_{2}(\tau, l)) 
        \geq \tilde{L}_{\tau} l^{2} -  \frac{7N_{\tau}}{2\sqrt{\lambda_{2}}} l 
        > \frac{\tilde{L}_{\tau}}{2} l^{2} = Q_{\tau} l^{2}
    \end{align*}
    for all $l \geq C_{\tau}$. 
    Therefore, we have proved our lemma. 
\end{proof}
%
%
%
%
%
%
%
%
%
%
%
%
%
%
%
%
%
%
%
%
%
%
%
%
%
%
%
%
%
%
%
%
%
%
%
%
%
%
%
%
%
%
%
%
%
%
%
%
%
%
%
%
%
%
%
%
%
%
%
%
%
%
%
%
%
%
%
%
%
%
%
%
%
%
%
%
%
%
%
%
%
%
%
%
%
%
%
%
%
%
%
%
%
%
%
%
%
%
%
%
%
%
%
%
%
%
%
%
%
%
%
%
%
%
%
%
%
%
%
%
%
%
%
%
%
%
%
%
%
%
%
%
%
%
%
%
%
%
%
%
%
%
%
%
\section{Proof of the main theorem}
In this section, we prove the main theorem (Theorem \ref{main2}). 
Rest of this paper, we use the notations in Section 3. 
\subsection{Lemmas for the main theorem} \label{lemmas_for_main_theorem}
In this subsection, we prove the following lemmas to apply to Case (1) in the proof of Theorem \ref{main2}. 

\begin{lemma}
    Let $f(z) := 1/z \ (z \in \mathbb{C} \setminus \{ 0 \})$. 
    Then, for each $B(x, r) (\subset \mathbb{C})$ with $r < |x|$, we have 
    \begin{align*}
        f(B(x, r)) = B\left( \frac{|x|^{2}}{|x|^{2} - r^{2}}\cdot \frac{1}{x}, \ \frac{r}{|x|^{2} - r^{2}} \right). 
    \end{align*}
\end{lemma}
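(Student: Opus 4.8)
The plan is to reduce the set identity to an elementary equivalence of inequalities, using that $f(z)=1/z$ is an involution of $\mathbb{C}\setminus\{0\}$. Since $r<|x|$ we have $0\notin B(x,r)$, hence $B(x,r)\subset\mathbb{C}\setminus\{0\}$ and $f$ is defined and injective on $B(x,r)$; moreover $0\notin f(B(x,r))$ because $1/w$ is never $0$. Because $f\circ f=\id$, for any $z\in\mathbb{C}\setminus\{0\}$ one has $z\in f(B(x,r))$ if and only if $f(z)=1/z\in B(x,r)$ (the preimage of $z$ is $1/z$ itself). Finally, $|x|>r$ forces $\bigl|\frac{|x|^{2}}{|x|^{2}-r^{2}}\cdot\frac1x\bigr|=\frac{|x|}{|x|^{2}-r^{2}}>\frac{r}{|x|^{2}-r^{2}}$, so the target open disk also avoids $0$. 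Thus it suffices to prove, for every $z\in\mathbb{C}\setminus\{0\}$,
\begin{equation*}
  \left|\tfrac{1}{z}-x\right|<r
  \quad\Longleftrightarrow\quad
  \left|z-\frac{|x|^{2}}{|x|^{2}-r^{2}}\cdot\frac{1}{x}\right|<\frac{r}{|x|^{2}-r^{2}}.
\end{equation*}

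To establish this I would clear denominators in the left inequality: multiplying by $|z|>0$ it becomes $|1-xz|<r|z|$, and squaring (both sides nonnegative) gives $|1-xz|^{2}<r^{2}|z|^{2}$. Expanding $|1-xz|^{2}=1-2\operatorname{Re}(xz)+|x|^{2}|z|^{2}$ and rearranging turns this into
\begin{equation*}
  (|x|^{2}-r^{2})\,|z|^{2}-2\operatorname{Re}(xz)+1<0.
\end{equation*}
Here $|x|^{2}-r^{2}>0$ by hypothesis; dividing by it and completing the square in $z$ — using $1/x=\bar x/|x|^{2}$ so that the center of the completed square is exactly $c:=\frac{|x|^{2}}{|x|^{2}-r^{2}}\cdot\frac1x=\frac{\bar x}{|x|^{2}-r^{2}}$ — yields $|z-c|^{2}<|c|^{2}-\frac{1}{|x|^{2}-r^{2}}$. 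A one-line computation gives $|c|^{2}-\frac{1}{|x|^{2}-r^{2}}=\frac{|x|^{2}}{(|x|^{2}-r^{2})^{2}}-\frac{1}{|x|^{2}-r^{2}}=\frac{r^{2}}{(|x|^{2}-r^{2})^{2}}$, which is precisely the square of the asserted radius. Every step above is reversible, so the displayed equivalence follows, and combining it with the first paragraph gives $f(B(x,r))=B\bigl(c,\frac{r}{|x|^{2}-r^{2}}\bigr)$, as claimed.

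I do not anticipate a genuine obstacle: this is the classical fact that the inversion $z\mapsto 1/z$, being a M\"obius map whose unique pole lies outside $\overline{B(x,r)}$, carries the disk onto a genuine bounded disk, and the only work is the bookkeeping with real parts and conjugates that pins down the center and radius. The one point deserving care is the passage from "equivalence of inequalities" to "equality of sets," which rests on $f$ being an involution of $\mathbb{C}\setminus\{0\}$; alternatively one may phrase it as two inclusions — $f$ maps $B(x,r)$ into $B\bigl(c,\frac{r}{|x|^{2}-r^{2}}\bigr)$ and maps that disk back into $B(x,r)$ — which, together with $f\circ f=\id$, force the equality.
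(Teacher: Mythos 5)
Your proof is correct, and it takes a somewhat different route from the paper's. The paper computes with the \emph{boundary}: it shows that $|1/a - \overline{x}/(|x|^{2}-r^{2})| = r/(|x|^{2}-r^{2})$ holds if and only if $|x-a|=r$ (via the factorization $(r^{2}-|x-a|^{2})(r^{2}-|x|^{2})=0$), and then checks that the center $x$ is sent to an interior point of the target disk; concluding the equality of the \emph{open} disks from this requires an additional (unstated) topological step, namely that $f$ is a homeomorphism near $\overline{B(x,r)}$ carrying the boundary circle onto the boundary circle and hence the bounded complementary component onto the bounded one. You instead exploit $f\circ f=\mathrm{id}$ to rewrite $z\in f(B(x,r))$ as $1/z\in B(x,r)$ and establish a direct, reversible equivalence of the two strict inequalities by clearing denominators and completing the square, with the radius emerging from $|c|^{2}-\frac{1}{|x|^{2}-r^{2}}=\frac{r^{2}}{(|x|^{2}-r^{2})^{2}}$; together with your observation that $0$ lies in neither disk, this yields the set equality with no topological input. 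Your version is thus more self-contained and actually closes the small gap at the end of the paper's argument, at the cost of slightly longer bookkeeping; the paper's version isolates a cleaner algebraic identity on the boundary. Both computations are, at heart, the same expansion of $|1-xz|^{2}$ versus $r^{2}|z|^{2}$.
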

\begin{proof} \label{inverse_circle}
    Let $z \in \partial B(x, r)$. 
    Since $|z-x| = r$ for each $z \in \partial B(x, r)$, we have
    \begin{align*}
        \left| \frac{1}{z} - \frac{\overline{x}}{|x|^{2} - r^{2}} \right|
        &= \left| \frac{x \overline{x} -r^{2} - \overline{x}z}{z(|x|^{2} - r^{2})} \right|
        = \left| \frac{|x - z|^{2} - r^{2} - \overline{z}(x-z)}{z(|x|^{2} - r^{2})} \right|
        = \frac{|\overline{z}||x-z|}{|z|(|x|^{2} - r^{2})}
        = \frac{r}{|x|^{2} - r^{2}}. 
    \end{align*}
    It follows that $f(\partial B(x, r)) = \partial B( \bar{x}/(|x|^{2} - r^{2}), r/(|x|^{2} - r^{2}))$. 
    Besides, since $f(x) = 1/x$ and $r < |x|$, we have
    \begin{align*}
        \left| f(x) - \frac{|x|^{2}}{|x|^{2} - r^{2}}\cdot \frac{1}{x} \right| = \frac{r}{|x|^{2} - r^{2}} \frac{r}{|x|} < \frac{r}{|x|^{2} - r^{2}}. 
    \end{align*}
    It follows that $f(x) \in B( \bar{x}/(|x|^{2} - r^{2}), r/(|x|^{2} - r^{2}))$. 
    Therefore, we have proved our lemma. 
\end{proof}

\begin{lemma}\label{Etauestimate}
    Let $\tau \in A_{0}$, $z_{0} \in \mathbb{R}^{2}$ and $R_{0} > 0$. 
    Then, we have
    \begin{align*}
        E_{\tau}(B(E_{\tau}^{-1} z_{0}, R_{0}/\sqrt{\lambda_{2}})) \subset B(z_{0}, R_{0}). 
    \end{align*}
\end{lemma}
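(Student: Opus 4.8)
The plan is to reduce the statement to the elementary linear-algebra fact that the operator norm of the invertible map $E_{\tau}$ equals $\sqrt{\lambda_{2}}$. First I would recall that $F_{\tau} = {}^{t}E_{\tau}E_{\tau}$ is symmetric and positive definite with eigenvalues $\lambda_{1} < \lambda_{2}$, that $V_{\tau} = (v_{1}, v_{2})$ is an orthonormal eigenbasis, and that for every $y \in \mathbb{R}^{2}$ one has $|E_{\tau}y|^{2} = {}^{t}y\,{}^{t}E_{\tau}E_{\tau}\,y = {}^{t}y F_{\tau} y$. Writing $y = c_{1}v_{1} + c_{2}v_{2}$ in this eigenbasis gives ${}^{t}y F_{\tau} y = \lambda_{1}c_{1}^{2} + \lambda_{2}c_{2}^{2} \leq \lambda_{2}(c_{1}^{2} + c_{2}^{2}) = \lambda_{2}|y|^{2}$, so $|E_{\tau}y| \leq \sqrt{\lambda_{2}}\,|y|$ for all $y \in \mathbb{R}^{2}$.

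Next I would take an arbitrary point $p \in B(E_{\tau}^{-1}\tilde{x}, \tilde{R}/\sqrt{\lambda_{2}})$; note $E_{\tau}^{-1}\tilde{x}$ is well defined since $E_{\tau}$ is invertible, and $|p - E_{\tau}^{-1}\tilde{x}| < \tilde{R}/\sqrt{\lambda_{2}}$. By linearity, $E_{\tau}p - \tilde{x} = E_{\tau}p - E_{\tau}E_{\tau}^{-1}\tilde{x} = E_{\tau}(p - E_{\tau}^{-1}\tilde{x})$, so the bound from the first step yields
\begin{equation*}
    |E_{\tau}p - \tilde{x}| \leq \sqrt{\lambda_{2}}\,|p - E_{\tau}^{-1}\tilde{x}| < \sqrt{\lambda_{2}} \cdot \frac{\tilde{R}}{\sqrt{\lambda_{2}}} = \tilde{R}.
\end{equation*}
Hence $E_{\tau}p \in B(\tilde{x}, \tilde{R})$, and since $p$ was arbitrary we obtain $E_{\tau}(B(E_{\tau}^{-1}\tilde{x}, \tilde{R}/\sqrt{\lambda_{2}})) \subset B(\tilde{x}, \tilde{R})$.

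There is essentially no serious obstacle here; the only point requiring care is the direction of the estimate — we need an \emph{upper} bound on the norm of $E_{\tau}$, which is why the \emph{largest} eigenvalue $\lambda_{2}$ appears, and the radius on the left is contracted by precisely the compensating factor $1/\sqrt{\lambda_{2}}$. One could equivalently argue via the singular value decomposition of $E_{\tau}$, whose largest singular value is $\sqrt{\lambda_{2}}$, but the Rayleigh-quotient computation above is the most self-contained route and fits directly with the notation already set up in Section 3.
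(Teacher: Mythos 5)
Your proof is correct and follows essentially the same route as the paper: both arguments reduce to the Rayleigh-quotient estimate ${}^{t}y F_{\tau} y = \lambda_{1}c_{1}^{2} + \lambda_{2}c_{2}^{2} \leq \lambda_{2}|y|^{2}$ in the orthonormal eigenbasis given by $V_{\tau}$, applied to $y = p - E_{\tau}^{-1}\tilde{x}$. No issues.
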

\begin{proof}
    Let $z \in B(E_{\tau}^{-1} z_{0}, R_{0}/\sqrt{\lambda_{2}})$. 
    We set $\Delta z := z - E_{\tau}^{-1} z_{0}$ for simplicity. 
    Since $V_{\tau}$ is orthogonal and $|\Delta z|^{2} = | z - E_{\tau}^{-1} z_{0} |^{2} < R_{0}^{2}/\lambda_{2}$, we have 
    \begin{align*}
        | E_{\tau} z - z_{0} |^{2} 
        &= |E_{\tau} (\Delta z)|^{2} 
        = {}^{t} (\Delta z) {}^{t} E_{\tau} E_{\tau} \Delta z 
        = {}^{t} (\Delta z) F_{\tau} (\Delta z) \\
        &= {}^{t} (\Delta z) V_{\tau} \left( \begin{array}{cc} \lambda_{1} & 0 \\ 0 & \lambda_{2} \end{array}\right) {}^{t}V_{\tau} (\Delta z) 
        = \lambda_{1} v_{1}^{2} + \lambda_{2} v_{2}^{2}
        \leq \lambda_{2} |{}^{t} V_{\tau} (\Delta z)|^{2} 
        = \lambda_{2} |\Delta z|^{2} 
        < R_{0}^{2},    
    \end{align*}
    where ${}^{t} (v_{1}, v_{2}) := {}^{t} V_{\tau} (\Delta z )$. 
    Therefore, we have proved our lemma. 
\end{proof}
By 
Lemma \ref{Etauestimate}, we finally show the following lemma. 
\begin{lemma}\label{latticepointcase1lemma}
    Let $\tau \in A_{0}$. 
    Then, there exist $C_{\tau}^{\prime} \geq 1$ and $Q_{\tau}^{\prime} > 0$ such that for all
    $w \in E_{\tau}(\mathbb{R}_{+}^{2})$ and $T \geq C_{\tau}^{\prime}$ with $|w| > T$, we have 
    \begin{align*}
        \# (I_{\tau} \cap B(0, |w|) \cap B(w, T)) > Q_{\tau}^{\prime} T^{2}, 
    \end{align*}
    where $\mathbb{R}_{+}$ is the set of positive real numbers. 
\end{lemma}
\begin{proof}
Let $\tau \in A_{0}$. 
We set $Q_{\tau}^{\prime} := 1/(32\lambda_{2})$ and let $C_{\tau}^{\prime} \geq 34 \sqrt{\lambda_{2}}+1$ be a number  such that, for each $T \geq C_{\tau}^{\prime}$,  
\begin{align}\label{halfcoefficient2}
    \frac{(T - 2\sqrt{2\lambda_{2}})^{2}}{16 \lambda_{2}} 
    - \frac{T^{2}}{32\lambda_{2}} > 0. 
\end{align}
Let $T \geq C_{\tau}^{\prime}$. 
Note that $l := T/(2\sqrt{\lambda_{2}}) -\sqrt{2} > 17 - 2 = 15$, and by using a geometric observation 
(and Proposition 4.3 in \cite{IS}), we deduce that for each $m_{0}, n_{0} \in \mathbb{N}$
\begin{align} \label{laticepoint_revised}
    \# (\{ ^{t}(m, n) \in \mathbb{N}^{2} \ | \ (m - m_{0})^{2} + (n - n_{0})^{2} \leq l^{2} \})
    &= \# ( \{ ^{t}(m, n) \in \mathbb{N}^{2} \ | \ m^{2} + n^{2} \leq l^{2} \}) 
    > \frac{l^{2} -7l +7}{2} 
    > \frac{l^{2}}{4}.  
\end{align}

Now, let $w \in E_{\tau}(\mathbb{R}_{+}^{2})$ with $|w| > T$ and we set $\xi := E_{\tau}^{-1} \left( 1 - T/(2|w|) \right) w \in \mathbb{R}_{+}^{2}$. 
Then, note that $B\left( E_{\tau} \xi, T/2 \right) \subset B(0, |w|) \cap B(w, T)$. 
Indeed, let $z \in B\left( E_{\tau} \xi, T/2 \right)$. 
Since $|w| > T$ and $E_{\tau} \xi = w - T/(2|w|) w$, we deduce the following inequalities: 
\begin{align*}
    &|z| 
    \leq \left| z - \left( w - \frac{T}{2|w|} w \right) \right| + \left|  w - \frac{T}{2|w|} w \right|  
    < \frac{T}{2} + \left|  1 - \frac{T}{2|w|} \right||w| 
    = \frac{T}{2} + \left( 1 - \frac{T}{2|w|} \right) |w| 
    = |w| \quad \text{and} \\  
    &|z - w| \leq \left| z - \left( w - \frac{T}{2|w|}w \right) \right| + \left| \frac{T}{2}\frac{w}{|w|} \right| 
    < \frac{T}{2} + \frac{T}{2} 
    = T.  
\end{align*}

Therefore, by Lemma \ref{Etauestimate} with $z_{0} := E_{\tau} \xi$ and $R_{0} := T/2$, we have 
\begin{align} \label{small_circle_estimate}
    E_{\tau} \left( \mathbb{N}^{2} \cap B \left( \xi, \frac{T}{2\sqrt{\lambda_{2}}} \right) \right)
    = I_{\tau} \cap E_{\tau}B \left( \xi, \frac{T}{2\sqrt{\lambda_{2}}} \right)
    \subset I_{\tau} \cap B\left( E_{\tau} \xi, \frac{T}{2} \right) 
    \subset I_{\tau} \cap B(0, |w|) \cap B(w, T), 
\end{align} 
where we use the fact $I_{\tau} = E_{\tau}(\mathbb{N}^{2})$ and $E_{\tau}$ is injective. 
In addition, let $\zeta_{1}$ and $\zeta_{2}$ be minimum integers with $\zeta_{1} \geq \xi_{1}$ and $\zeta_{2} \geq \xi_{2}$, where $\xi := (\xi_{1}, \xi_{2})$. 
We set $\zeta := {}^{t}(\zeta_{1}, \zeta_{2}) \in \mathbb{N}^{2}$. 
Then, since $|\zeta - \xi|^{2} \leq 2$, we have
\begin{align} \label{ajustlattice}
    B \left( \zeta, \frac{T}{2\sqrt{\lambda_{2}}} - \sqrt{2} \right) \subset B \left( \xi, \frac{T}{2\sqrt{\lambda_{2}}} \right). 
\end{align} 
By the inclusions (\ref{small_circle_estimate}) and (\ref{ajustlattice}), the inequalities (\ref{laticepoint_revised}) and (\ref{halfcoefficient2}), and the definition of $C_{\tau}^{\prime}$ and $Q_{\tau}^{\prime}$, we have
\begin{align*}
    \# ( I_{\tau} \cap B(0, |w|) \cap B(w, T))
    &\geq \# \left(\mathbb{N}^{2} \cap B \left( \xi , \frac{T}{2\sqrt{\lambda_{2}}} \right) \right) 
    \geq \# \left( \mathbb{N}^{2} \cap B \left( \zeta, \frac{T}{2\sqrt{\lambda_{2}}} - \sqrt{2} \right) \right) \\
    &= \# ( \{ ^{t}(m, n) \in \mathbb{N}^{2} \ | \ (m - \zeta_{1})^{2} + (n - \zeta_{2})^{2} \leq l^{2} \} ) \\
    &> \frac{l^{2}}{4} 
    = \frac{(T - 2\sqrt{2\lambda_{2}})^{2}}{16\lambda_{2}}
    > \frac{T^{2}}{32\lambda_{2}}
    = Q_{\tau}^{\prime} T^{2}.   
\end{align*}
Thus, we have proved our lemma. 
\end{proof}
Note that by replacing $C_{\tau}^{\prime}$ a sufficient large number, we also obtain that $(T-1)/T \geq 1/2$ for each $T \geq C_{\tau}^{\prime}$. 
\subsection{Proof for Theorem \ref{main2}}
\label{setting_for_main_theorem}
We now prove Theorem \ref{main2}. 
Note that $S_{\tau}$ is regular for each $\tau \in A_{0}$ by Lemma \ref{GCCFisCIFS} and the comment after Definition \ref{RD}. 
Rest of this section, let $K \geq 1$ be a number which satisfies 1. $\sim$ 3. in Lemma \ref{basicestimate}. 

\begin{proof}[Proof of Theorem \ref{main2}]
It suffices to show that $S_{\tau}$ satisfies the assumption of Theorem \ref{packingmeasurefiniteness} for each $\tau \in A_{0}$. 
Let $\tau \in A_{0}$ and we set $r_{0} := \min \{ 1/8, K C_{\tau}^{-1} \} (> 0)$, where $C_{\tau} > 0$ is the number in Lemma \ref{lattice_point_in_annulus}. 
Recall that $N_{\tau} = \sqrt{2\lambda_{2}}/\sqrt{\lambda_{1}} + 1 (> 2)$, and $C_{\tau}^{\prime}$, $Q_{\tau}^{\prime}$ and $Q_{\tau}$ are the numbers in Lemmas \ref{latticepointcase1lemma} and \ref{lattice_point_in_annulus} respectively (also, recall the comment after the proof of Lemma \ref{latticepointcase1lemma}). 
We define constants $L_{\tau}^{\prime}$ and $L_{\tau}$ as follows: 
\begin{align*}
    L^{\prime}_{\tau} 
    &:= \min \{ Q_{\tau}^{\prime}/4, (C_{\tau}^{\prime} + 1)^{-2} \} \ (> 0)
    \quad \text{and} \\
    L_{\tau} 
    &:= \min \left\{L_{\tau}^{\prime}( 8K )^{-h_{\tau}}, Q_{\tau} K^{2-3h_{\tau}} N_{\tau}^{-2h_{\tau}} 2^{2-2h_{\tau}} \right\} ( > 0 ). 
\end{align*}
Let $b \in I_{\tau}$ and $r > 0$ with $K \cdot \diam(\phi_{b}(X)) \leq r \leq r_{0}^{2}$ and we set $x := 1/b = \phi_{b}(0) \in \phi_{b}(X)$. 
To prove Theorem \ref{main2}, it suffices to show the following claim (see Theorem \ref{packingmeasurefiniteness}): 
\begin{claim*}[$\star$]
Let $m_{\tau}$ be the $h_{\tau}$-conformal measure of $S_{\tau}$. 
Then, 
\begin{align*} \label{sufficient_to_show}
    m_{\tau}(B(x, r)) \geq L_{\tau} r^{h_{\tau}}. 
\end{align*}
\end{claim*}
\noindent Rest of this paper, we consider the following three cases. 
\begin{enumerate}
    \item[] Case (1) $r \leq |x|/2$, 
    \item[] Case (2) $|x|/2 < r \leq 2|x|$ and 
    \item[] Case (3) $2|x| < r$. 
\end{enumerate}

We consider Case (1) $r \leq |x|/2$. 
Recall that by the assumption and Lemma \ref{basicestimate}, we have 
\begin{equation} \label{assumption_estimate}
    0 < r \ \leq \frac{|x|}{2} < |x| 
    \quad \text{and} \quad
    |x|^{2} = K \cdot K^{-1} |b|^{-2} \leq K \cdot \diam \phi_{b}(X) \leq r. 
\end{equation}
For simplicity, we set $f(z) := 1/z \ (z \in \mathbb{C} \setminus \{ 0 \})$ and  
\begin{align*}
    w := \frac{|x|^{2}}{|x|^{2} - r^{2}} \cdot \frac{1}{x} \quad \text{and} \quad  R := \frac{r}{|x|^{2} - r^{2}}. 
\end{align*}
Also, we set $I_{\tau}(x, r) := \{ a \in I_{\tau} \ | \ \phi_{a}(X) \subset B(x, r) \} = \{ a \in I_{\tau} \ | \ B(a + 1/2, 1/2) \subset f(B(x, r)) = B(w, R) \}$ (see Lemma \ref{inverse_circle})
and $I_{\tau, 1} := I_{\tau} (x, r) \cap B(0, |w|)$. 

Note that $\phi_{a}(X) \subset B(x, r)$ for each $a \in I_{\tau, 1}$. 
Besides, by the assumption of Case (1) and the definition of $I_{\tau, 1}$, 
we deduce that $|w| = |x|/(|x|^{2} - r^{2}) \leq 4/(3|x|)$  and for each $a \in I_{\tau, 1}$
\begin{align} \label{element_estimate}
    |a|^{-1} \geq |\omega|^{-1} \geq \frac{3}{4} \cdot |x|. 
\end{align}

Then, by the inequality (\ref{assumption_estimate}), we can show that $b \in I_{\tau, 1}$. 
Indeed, since $|b| = 1/|x| < |\omega|$, it is sufficient to show that $|w - (b + 1/2)| \leq R - 1/2$. 
By direct calculations and the inequality (\ref{assumption_estimate}), 
we have
\begin{align*}
    \left( R - \frac{1}{2} \right)^{2} - \left| w - \left( b + \frac{1}{2} \right) \right|^{2}
    = R^{2} - R - r^{2}|b|^{2} R^{2} + R r \Re(b)
    = R (r|b|^{2} - 1 + r \Re(b) )\geq 0, 
\end{align*}
where we use the equality $w - b = R r \cdot b$ and $(1- r^{2} |b|^{2})R = r |b|^{2}$, and $\Re(b) \ ( \geq 0 ) $ is the real part of $b$.  
Therefore, we have proved $b \in I_{\tau, 1}$ by the inequality (\ref{assumption_estimate}). 

%
%
%
%
%
%
%
%
%
%
%
%
%
%
%
%
%
%
%
%
%
%
We next show that 
\begin{align} \label{cardinality_estimate}
    \# (I_{\tau, 1}) > L^{\prime}_{\tau} \cdot r^{2} |x|^{-4}. 
\end{align}
To this end, 
if $R \geq C_{\tau}^{\prime} + 1$, 
then by the definition of $I_{\tau, 1}$, we have 
\begin{align*}
    I_{\tau, 1} 
    &= \{ a \in I_{\tau} \ | \ B(a + 1/2, 1/2) \subset B(w, R) \} \cap B(0, |w|) \notag \\
    &\supset \{ a \in I_{\tau} \ | \ a \in B(w, R-1) \} \cap B(0, |w|) 
    = I_{\tau} \cap B(w, R-1) \cap B(0, |w|). 
\end{align*}
Recall that $w \in E_{\tau} (\mathbb{R}_{+}^{2})$, $R -1 \geq C_{\tau}^{\prime}$ and $|w| = |x|/ (|x|^{2} - r^{2})> R > R -1$ by the inequality (\ref{assumption_estimate}). 
By Lemma \ref{latticepointcase1lemma} and the comment after the proof of Lemma \ref{latticepointcase1lemma}, we have 
\begin{align*}
    \# ( I_{\tau, 1} ) 
    &\geq \# ( I_{\tau} \cap B(w, R-1) \cap B(0, |w|) )
    > Q_{\tau}^{\prime} (R-1)^{2} \geq \frac{Q_{\tau}^{\prime}}{4} R^{2} 
    \geq L^{\prime}_{\tau} R^{2}
    > L^{\prime}_{\tau} r^{2} |x|^{-4}. 
\end{align*}
If $C_{\tau}^{\prime} + 1 > R$, then since $b \in I_{\tau, 1}$ (which is deduced by the inequality (\ref{assumption_estimate})) we have 
\begin{align*}
    \# (I_{\tau, 1}) 
    \geq 1 > \left( \frac{R}{C_{\tau}^{\prime} + 1} \right)^{2} 
    \geq L^{\prime}_{\tau} R^{2}
    > L^{\prime}_{\tau} r^{2} |x|^{-4}. 
\end{align*}
Therefore, we have proved the inequality (\ref{cardinality_estimate}). 

%
%
%
%
%
%
%
%
%
%
%
%
%
Now, by the inequalities (\ref{useful_inequality}) with $I := I_{\tau, 1}$, (\ref{element_estimate}),  (\ref{cardinality_estimate}), $|x|^{2} \leq r$ (in (\ref{assumption_estimate})), and $h_{\tau} - 2 < 0$ (see Lemma \ref{GCCFisCIFS}), it follows that 
\begin{align}
    m_{\tau}(B(x, r)) 
    &\geq m_{\tau}\left( \bigcup_{a \in I_{\tau, 1}} \phi_{a}(X) \right)
    = \sum_{a \in I_{\tau, 1}} K^{-h_{\tau}} |a|^{-2h_{\tau}}
    \geq \sum_{a \in I_{\tau, 1}} K^{-h_{\tau}} \left( \frac{9}{16} \right)^{h_{\tau}} |x|^{2h_{\tau}} \notag \\
    &\geq \# (I_{\tau, 1}) \cdot (2K)^{-h_{\tau}} |x|^{2h_{\tau}}
    \geq L^{\prime}_{\tau} (2K)^{-h_{\tau}} \cdot r^{2} \ |x|^{2h_{\tau}-4} 
    \geq L^{\prime}_{\tau} (2K)^{-h_{\tau}} \cdot r^{2} r^{h_{\tau}-2} \notag \\
    &\geq L^{\prime}_{\tau} (2K)^{-h_{\tau}} r^{h_{\tau}} 
    \geq L_{\tau} r^{h_{\tau}}. \label{final_inequality} 
\end{align} 
Thus, we have proved the statement of Claim ($\star$) for Case (1) $r \leq |x|/2$. 

We next consider Case (2) $|x|/2 < r \leq 2|x|$. 
We set $\tilde{r} := r/4$. 
Then, by the assumption, we have $\tilde{r} \leq |x|/2$. 
In addition, by the inequality $|x|^{2} = K \cdot K^{-1} |b|^{-2} \leq K \cdot \diam \phi_{b}(X) \leq r_{0}^{2}$ (see Lemma \ref{basicestimate} ), the definition of $r_{0}$ and the assumption, we have
\begin{align*}
    |x|^{2} \leq r_{0} \cdot |x| \leq \frac{1}{8} \cdot 2r = \tilde{r}.  
\end{align*}
Therefore, the positive real number $\tilde{r} > 0$ satisfies the inequalities $\tilde{r} \leq |x|/2$ (the assumption of Case (1)), 
$\tilde{r} < |x|$ and $|x|^{2} \leq \tilde{r}$ (the inequalities (\ref{assumption_estimate})) instead of $r >0$. 
By the same argument as the proof of $b \in I_{\tau}(x, r)$ and the inequalities (\ref{element_estimate}), (\ref{cardinality_estimate}) and (\ref{final_inequality}) in Case (1) with $\tilde{r} > 0$ instead of $r > 0$, we have
\begin{align*}
    m_{\tau}(B(x, r)) &\geq m_{\tau}(B(x, \tilde{r})) 
    \geq L^{\prime}_{\tau} (2K)^{-h_{\tau}} \tilde{r}^{h_{\tau}} 
    \geq L^{\prime}_{\tau} (2K)^{-h_{\tau}} 4^{-h_{\tau}} r^{h_{\tau}}
    \geq L^{\prime}_{\tau} (8K)^{-h_{\tau}} r^{h_{\tau}} \geq L_{\tau} r^{h_{\tau}}. 
\end{align*}
Thus, we have proved the statement of Claim ($\star$) for Case (2) $|x|/2 < r \leq 2|x|$. 

We finally consider Case (3) $2|x| < r$. 
We set $I_{\tau}(r) := \{ a \in I_{\tau} \ | \ r/N_{\tau} \leq 2 K |a|^{-1} < r \}$. 
Note that $ B(0, r/2) \subset B(x, r)$ by the assumption and the inequality $|y - x| \leq |x| + |y| < r/2 + r/2 = r$, where $y$ is an element of $B(0, r/2)$. 

Also, note that $\phi_{a}(X) \subset B(0, K|a|^{-1}) \subset B(0, r/2)$ for each $a \in I_{\tau}(r)$ by Lemma \ref{basicestimate}, and $|a|^{-1} \geq (2 K N_{\tau})^{-1} r$ for each $a \in I_{\tau}(r)$. 
In addition, we have 
\begin{align} \label{taulatticepointestimate}
    \# (I_{\tau}(r)) > 4 Q_{\tau} K^{2} r^{-2}. 
\end{align}
To show this, note that $2K r^{-1} > C_{\tau}$ since $r/2 < r_{0} \leq K C_{\tau}^{-1}$, where $ C_{\tau}$ is the number in Lemma \ref{lattice_point_in_annulus}. 
It follows that 
\begin{align*}
    \# (I_{\tau}(r) ) 
    &= \# ( \{ a \in I_{\tau} \ | \ 2K r^{-1} < |a| \leq 2 N_{\tau} K r^{-1} \} ) 
    = \# ( I_{\tau} \cap D_{2}(\tau, 2 K r^{-1}) )  
    > 4 Q_{\tau} K^{2} r^{-2}. 
\end{align*}
Therefore, we have proved the inequality (\ref{taulatticepointestimate}). 
Now, by the inequalities (\ref{useful_inequality}) and (\ref{taulatticepointestimate}), we deduce that
\begin{align*}
    m_{\tau}(B(x, r)) 
    &\geq m_{\tau}(B(0, r/2)) 
    \geq m_{\tau}\left( \bigcup_{a \in I_{\tau}(r)} \phi_{a}(X) \right) 
    \geq \sum_{a \in I_{\tau}(r)} K^{-h_{\tau}} |a|^{-2h_{\tau}} \\
    &\geq \# (I_{\tau}(r)) \cdot K^{-h_{\tau}} \left( \frac{r}{2KN_{\tau}} \right)^{2h_{\tau}}  
    > 2^{2-2h_{\tau}} Q_{\tau} K^{2-3h_{\tau}} N_{\tau}^{-2h_{\tau}} \cdot r^{2h_{\tau}-2} \\
    &\geq Q_{\tau} K^{2-3h_{\tau}} N_{\tau}^{-2h_{\tau}} 2^{2-2h_{\tau}} r^{h_{\tau}} \geq L_{\tau} r^{h_{\tau}},  
\end{align*}
where we use the inequality $r^{2h_{\tau}-2} > r^{h_{\tau}}$ since $r < r_{0} \leq 1/8 <1$ and $h_{\tau} < 2$ (see Lemma \ref{GCCFisCIFS}). 
Thus, 
we have proved 
the statement of Claim ($\star$) for Case (3) $2|x| < r$. 

Hence, by the three cases (Cases (1) $\sim$ (3)), 
we have proved Theorem \ref{main2}. 
\end{proof}

\section*{Acknowledgement} 
The authors would like to thank Mariusz Urba\'{n}ski for helpful comments on \cite{MU}. 
The first author is supported by JST CREST Grant Number JPMJCR1913 and the second author is partially supported by JSPS Grant-in-Aid for Scientific Research (B) Grant Number JP 19H01790.

\end{document}